\newtheorem{thm}{Theorem}[section]
\newtheorem{specialthm}{Theorem}
\newtheorem{cor}[thm]{Corollary}
\newtheorem{lem}[thm]{Lemma}
\newtheorem{prop}[thm]{Proposition}
\theoremstyle{remark}
\newtheorem{rem}[thm]{Remark}
\theoremstyle{definition}
\newtheorem{definition}[thm]{Definition}
\newtheorem{example}[thm]{Example}
\newcommand{\Cech}{\v{C}ech{} }
\newcounter{marginnotes}
\let\oldmarginpar\marginpar
\renewcommand{\marginpar}[1]{\addtocounter{marginnotes}{1}
\oldmarginpar[\raggedleft\footnotesize #1]{\raggedright\footnotesize #1}}
\newcommand{\countnotes}{\ifnum\value{marginnotes}=0 {}
\else
\begin{center}
\color{red}\bf\Huge
\ifnum\value{marginnotes}=1 {There is 1 margin note in total.}
\else {There are \arabic{marginnotes} margin notes in total.}
\fi
\end{center}
\fi}
\begin{document}
	\author{Dan Rust \\
	\tt{dr151@le.ac.uk}
	}
	\title{An uncountable set of tiling spaces with distinct cohomology}
	\date{}
	\maketitle
		\begin{abstract}
		We generalise the notion of a Barge-Diamond complex, in the one-dimensional case, to a mixed system of tiling substitutions. This gives a way of describing the associated tiling space as an inverse limit of Barge-Diamond complexes. We give an effective method for calculating the \Cech cohomology of the tiling space via an exact sequence relating the associated sequence of substitution matrices and certain subcomplexes appearing in the approximants. As an application, we show that there exists a system of three substitutions on two letters which exhibit an uncountable collection of minimal tiling spaces with distinct isomorphism classes of \Cech cohomology.
		\end{abstract}
		
	\setcounter{section}{0}
	\section*{Introduction}
		The dynamical, topological and combinatorial properties of sequences derived from \emph{$S$-adic systems} have been well studied for many years (See \cite{DurandLeroyRichomme} and the references within). They have been classically defined as those sequences which appear as a limit word of a sequence of substitutions $s=\lim_{n \to \infty}((\phi_1 \circ \cdots \circ \phi_n)(a))$ for some letter in a finite alphabet $a$, and the substitutions $\phi_i$ belonging to some finite family of substitutions $S$. In \cite{GahlerMaloney}, G\"{a}hler and Maloney extended this class of sequences to an analogous class of tilings of $\mathbb{R}^n$ which generalise the now well-studied \emph{substitution tilings} -- these are referred to as \emph{mixed substitution} or \emph{multisubstitution} tiling systems. It is of general interest to be able to calculate the \Cech cohomology and other invariants of moduli spaces of aperiodic tilings, or \emph{tiling spaces} (See \cite{BargeDiamond, BargeDiamondHuntonSadun, ClarkHunton, ForrestHuntonKellendonk, FrankSadun, GahlerHuntonKellendonk, Sadun1, Walton}).
		
		Using techniques developed by authors such as Kellendonk \cite{Kellendonk}, Anderson \& Putnam \cite{AndersonPutnam}, G\"{a}hler \cite{Gahler}, and Barge \& Sadun \cite{BargeSadun}, a general method for calculating the \Cech cohomology of tiling spaces associated to mixed substitution systems was developed by G\"{a}hler and Maloney. In particular, they showed with an example that the topology of the associated tiling spaces of mixed substitution systems can be dependent on the order in which the substitutions are applied, and not just on the family of substitutions being considered.
		
		This example provided a set of $1$-dimensional mixed substitution tiling spaces over a fixed collection of two substitutions, some of whose first \Cech cohomology groups differ, and in fact have ranks varying depending on the choice of the sequence in which the substitutions are applied. This certainly hints that the family of mixed substitution tiling spaces has a richer structure than the classical case of tiling spaces associated to singular substitutions.
		
		One line of inquiry could ask how much more wild this class of spaces can appear to be. We already have some partial results. For instance it is well known that the Sturmian sequences can all be generated as limits of a system of two substitutions on two letters. It is also known that the tiling spaces associated to the Sturmian sequences $s_{\alpha}$ and $s_{\beta}$ are homeomorphic if and only if the generating slopes $\alpha$ and $\beta$ have continued fraction representations whose tails agree after some finite number of shifts \cite{Fokkink,BargeWilliams}. In particular, this tells us that there are an uncountable number of distinct homeomorphism-types of mixed substitution tiling spaces -- in contrast with the case of singular substitutions. Unfortunately, these spaces cannot be distinguished by their \Cech cohomology which are all isomorphic to the direct sum of two copies of the integers $\mathbb{Z}^2$.
		
		Our main theorem is an improvement on this result. We show, using a little-known theorem of Goodearl and Rushing \cite{Goodearl}, that an uncountable family exists even when we weaken the equivalence criteria to homotopy-equivalence.
		\begin{specialthm}\label{MainTheorem}
		There exists a family of minimal mixed substitution tiling spaces exhibiting an uncountable collection of distinct isomorphism classes of first \Cech cohomology groups.
		\end{specialthm}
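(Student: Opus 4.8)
The plan is to make the family completely explicit and to reduce the statement to a purely algebraic input supplied by the Goodearl--Rushing theorem. I would fix three substitutions $\sigma_0,\sigma_1,\sigma_2$ on the two-letter alphabet $\{a,b\}$ and, for each directive sequence $\mathbf{s}=(s_1,s_2,\dots)\in\{0,1,2\}^{\mathbb{N}}$, build the associated mixed substitution tiling space $\Omega_{\mathbf{s}}$. Two of the substitutions, $\sigma_0$ and $\sigma_1$, I would select so that their abelianisation matrices $M_0,M_1$ are a pair of $2\times 2$ integer matrices meeting the hypotheses of the Goodearl--Rushing theorem. The third substitution $\sigma_2$ plays no part in the cohomological invariant: its sole purpose is to enforce minimality. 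I would take $\sigma_2$ to be a fixed primitive \emph{unimodular} substitution (for instance the Fibonacci substitution $a\mapsto ab,\ b\mapsto a$, so that $M_2\in GL_2(\mathbb{Z})$ and $M_2^2>0$ entrywise) and restrict attention to those directive sequences in which the symbol $2$ appears infinitely often with a fixed sparse pattern. Since all substitution matrices are non-negative and a positive factor occurs infinitely often along $\mathbf{s}$, the composed transition matrices are eventually strictly positive, so the system is primitive along the sequence; standard arguments then give that each $\Omega_{\mathbf{s}}$ is a minimal, aperiodic tiling space.

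Next I would apply the structural theorem to each $\Omega_{\mathbf{s}}$. Realising $\Omega_{\mathbf{s}}$ as the inverse limit of its Barge--Diamond approximants and feeding the sequence of matrices $M_{s_1},M_{s_2},\dots$ into the exact sequence relating the substitution matrices to the collared-vertex subcomplexes, I would arrange (by choosing proper representatives for $\sigma_0,\sigma_1$, or by passing to a suitable power) that the subcomplex contributions and their connecting maps stabilise to a fixed finitely generated free group. The interesting part then collapses to
\[
H^1(\Omega_{\mathbf{s}}) \;\cong\; \varinjlim\bigl(\mathbb{Z}^2 \xrightarrow{\,M_{s_1}^{T}\,} \mathbb{Z}^2 \xrightarrow{\,M_{s_2}^{T}\,} \mathbb{Z}^2 \xrightarrow{\,M_{s_3}^{T}\,} \cdots\bigr),
\]
a torsion-free abelian group of rank two. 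Because $M_2^{T}\in GL_2(\mathbb{Z})$, every occurrence of the symbol $2$ induces an isomorphism in the directed system, so the isomorphism type of this limit depends only on the subsequence of symbols drawn from $\{0,1\}$.

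The third step is to read off the isomorphism invariant, and here the Goodearl--Rushing theorem applies directly: for the chosen matrices the direct limits above realise uncountably many pairwise non-isomorphic groups as the subsequence of symbols in $\{0,1\}$ varies, the obstruction to isomorphism being a $p$-adic invariant attached to the prime(s) dividing $\det M_0$ and $\det M_1$. The source of uncountability is genuinely $2$-adic (or $p$-adic) rather than a matter of which primes occur: although only finitely many primes are involved, the relevant invariant of a rank-two group ranges over an uncountable $p$-adic set, and a sequence of matrices from the finite family $\{M_0^{T},M_1^{T}\}$ can steer it to uncountably many distinct limits. Since the invariant already takes uncountably many values over $\{0,1\}^{\mathbb{N}}$, the assignment $\mathbf{s}\mapsto H^1(\Omega_{\mathbf{s}})$ meets uncountably many isomorphism classes; choosing one admissible sequence per value yields an uncountable set $\mathcal{S}$ of directive sequences with $H^1(\Omega_{\mathbf{s}})\not\cong H^1(\Omega_{\mathbf{t}})$ for distinct $\mathbf{s},\mathbf{t}\in\mathcal{S}$, which is the assertion.

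The main obstacle is the tension between the two requirements imposed in the first step. The Goodearl--Rushing matrices that generate the uncountable family need not be primitive --- interesting $p$-adic behaviour typically comes from shear-type or otherwise degenerate matrices --- whereas minimality of the tiling space demands primitivity along the directive sequence. Reconciling these is precisely why three substitutions are needed rather than two, and the delicate point is to verify that inserting the primitive unimodular $\sigma_2$, however sparsely, alters the directed system only by isomorphisms and hence leaves the Goodearl--Rushing invariant intact. A secondary technical point, which I would need to check for the explicit $\sigma_0,\sigma_1,\sigma_2$, is that the collared-vertex terms of the exact sequence genuinely stabilise, so that $H^1(\Omega_{\mathbf{s}})$ is exactly the displayed direct limit and the invariant is not washed out by an uncontrolled extension.
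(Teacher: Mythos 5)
Your overall strategy is the same as the paper's: an explicit family of substitutions on two letters whose abelianisation matrices realise the Goodearl--Rushing direct limits, the Barge--Diamond exact sequence to identify $\check{H}^1(\Omega_{\mathbf{s}})$ with $\varinjlim(\mathbb{Z}^2,M_{s_i}^T)$ plus a fixed correction term, and then the countability of the Goodearl--Rushing equivalence classes against the uncountability of the parameter set. However, there is a genuine gap in your second step, precisely at the point you flag as ``delicate''. Writing the direct limit as $\bigcup_n P_n^{-1}\mathbb{Z}^2\subset\mathbb{Q}^2$ with $P_n=M_{s_n}^T\cdots M_{s_1}^T$, an occurrence of the unimodular matrix $U=M_2^T$ at stage $k+1$ indeed adds nothing at that stage (since $U^{-1}\mathbb{Z}^2=\mathbb{Z}^2$), but at the next stage the effective transition matrix becomes $M_{s_{k+2}}^TU$ rather than $M_{s_{k+2}}^T$, and this product is no longer one of the Goodearl--Rushing matrices $B_i$ (nor conjugate to one by the single fixed change of basis). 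So it is false that ``the isomorphism type of the limit depends only on the subsequence of symbols drawn from $\{0,1\}$'': the unimodular insertions twist every subsequent map, the limit is not $G_\beta$ for the $3$-adic (or $p$-adic) integer read off from the $\{0,1\}$-subsequence, and the Goodearl--Rushing classification --- whose proof depends on the explicit nested generators $w_n=w_0$, $z_n=3^{-n}(z_0-\alpha_{n-1}w_0)$ --- does not transfer verbatim. You would have to re-run their entire argument for these twisted systems, which you have not done.

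The paper avoids this obstacle entirely, and this is the one real idea your proposal is missing: the tension between primitivity and the shear-type Goodearl--Rushing matrices is resolved \emph{inside} the family, with no unimodular helper. One chooses three non-negative substitution matrices $M_0,M_1,M_2$, all of determinant $3$, simultaneously conjugate by a single $L\in GL_2(\mathbb{Z})$ to $B_0,B_1,B_2$ respectively (the paper's mixed Chacon substitutions $a\mapsto aabba,\ b\mapsto b$; $a\mapsto aab,\ b\mapsto bba$; $a\mapsto a,\ b\mapsto bbaab$). Every product $M_iM_j$ with $i\neq j$, and $M_1$ itself, is strictly positive, so the only directive sequences failing weak primitivity are those eventually constant $0$ or eventually constant $2$ --- a countable set whose removal leaves an uncountable set of $3$-adic parameters on which the Goodearl--Rushing theorem applies directly. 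Two smaller points you should also nail down: the vertex subcomplex $\Xi$ here is a circle, contributing a split $\mathbb{Z}$ summand, so one needs the cancellation fact that $A\oplus\mathbb{Z}\cong B\oplus\mathbb{Z}$ implies $A\cong B$ for abelian groups to see the extra summand does not collapse distinct classes; and recognisability of the system must be verified (the paper does this by an explicit unique-composition argument) before the inverse-limit description of $\Omega_{\mathbf{s}}$ is available at all.
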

		This has the (perhaps surprising) consequence that there exist tiling spaces $\Omega$ for which the first \Cech cohomology group $\check{H}^1(\Omega)$ cannot be written in the form
		$$A \oplus \left(\mathbb{Z}\left[\frac{1}{n_1}\right] \oplus \cdots \oplus \mathbb{Z}\left[\frac{1}{n_k}\right]\right)$$
		for finitely generated abelian group $A$ and natural numbers $n_i$, $1\leq i \leq k$, because there are only countably many distinct isomorphism classes of such groups. Moreover, it appears these pathological cohomology groups are in some sense typical. Nevertheless, almost every currently determined cohomology group of a tiling space is of the above form.
		
		In order to prove Theorem \ref{MainTheorem}, we leverage a construction by Barge and Diamond \cite{BargeDiamond} of the so-called BD (Barge-Diamond) complex of a tiling substitution. The BD complex is a CW complex associated to a single tiling substitution, built from combinatorial data and with the property that a suitably chosen continuous map on this complex, induced by the substitution, has an inverse limit which is homeomorphic to the tiling space. A cellular map can then be defined which is homotopic to this induced map, and which also acts simplicially on a particular subcomplex of the BD complex and maps this subcomplex into itself. A relative cohomology approach can then be used to produce an exact sequence which allows for the relatively straight forward computation of the \Cech cohomology of the tiling space in terms of the cohomology of a simplicial complex, and the direct limit of the transpose of the associated substitution transition matrix. These constructions and results were later generalised to tilings of $\mathbb{R}^n$ for all positive dimensions by Barge, Diamond, Hunton and Sadun in \cite{BargeDiamondHuntonSadun}.
		
		The original construction of these complexes, and the induced substitution maps between them, was only developed for single substitutions and so it is necessary to generalise their work to the case of mixed substitution systems. We essentially mirror the theory developed by G\"{a}hler and Maloney \cite{GahlerMaloney} for the Anderson-Putnam complex, but instead for the BD complex, and only in dimension-one.
		
		In Section \ref{Section:Background}, we introduce notation and basic definitions relating to the $3$-adic numbers. We provide an overview of the Goodearl-Rushing result \cite{Goodearl}, and for completeness and convenience to the reader, a proof of this result. Briefly, the result shows that the set of direct limits over $\mathbb{Z}^2$ of arbitrary sequences of matrices of the form $(\begin{smallmatrix} 1 & i \\ 0 & 3\end{smallmatrix})$, $i\in\{0,1,2\}$, have countable isomorphism classes. Notation is also introduced which will be used in the eventual proof of Theorem \ref{MainTheorem}.

		In Section \ref{Section:MixedSubstitutions}, we introduce the definitions and main results relating to mixed substitution systems and prove some important properties of their associated tiling spaces.

		In Section \ref{Section:BargeDiamond}, we construct, for a mixed substitution system, a sequence of BD complexes and induced maps between them. We show that the inverse limit of this sequence of maps is homeomorphic to the associated tiling space. We construct a sequence of homotopic cellular maps which can be used to effectively compute the first \Cech cohomology of the tiling space via an exact sequence. It was remarked in \cite{Gahler} that it might be possible to construct a `universal' BD complex for a family of mixed substitutions which satisfy some suitable property called \emph{self-correction}. The universal BD complex associated to a self-correcting mixed substitution system should satisfy the property that at each stage of the inverse limit representation of the tiling space, the approximant is the same, without affecting the cohomology of the limit. We provide one possible candidate for this property and show that such a universal BD complex can be constructed which behaves well in this sense if the mixed substitution system is self-correcting according to this definition.
		
		In Section \ref{chacon}, we use the Goodearl-Rushing result to define a family of mixed substitution systems we call the \emph{mixed Chacon tilings},  with the properties necessary to prove the main result. We prove that these properties are satisfied using the results from Section \ref{Section:BargeDiamond}.
		
		\textbf{Acknowledgements.} The author would like to thank his past and present supervisors, John Hunton and Alex Clark, for their helpful guidance. The author is supported by an EPSRC PhD studentship.

	\section{Background}\label{Section:Background}
		\subsection{The 3-adic numbers}
		We briefly review the notation surrounding the $3$-adic numbers and $3$-adic integers, which will be used throughout the statement and proof of the Goodearl-Rushing result. Let $\mathbb{Q}$ be the set of rational numbers on which we place the $3$-\emph{adic metric}. The metric is given by first defining the \emph{$3$-adic absolute value} $v_3\colon\mathbb{Q}\to\mathbb{R}$.
		\begin{definition}For a rational number $x$, if $x$ is non-zero write it in the form $x=3^n\frac{a}{b}$ where $n,a,b$ are integers, with $a,b$ not divisible by $3$. The \emph{$3$-adic valuation} $v_3\colon\mathbb{Q} \to \mathbb{Q} \cup \{\infty\}$ is given on $x$ by
		$$v_3(x)=
		\begin{cases}
		\infty, & \mbox{if } x=0\\
		n, & \mbox{if } x\neq 0
		\end{cases}.$$
		
		The \emph{$3$-adic absolute value} $| \cdot |_3 \colon\mathbb{Q} \to\mathbb{R}$ on $x$ is then given by $$|x|_3=3^{-v_3(x)}$$ where $3^{-\infty}$ is defined to be $0$.

		Given a pair of rational numbers $x$ and $y$, the \emph{$3$-adic metric} $d_3\colon\mathbb{Q}\times\mathbb{Q}\to\mathbb{R}$ is given by $$d_3(x,y)=|x-y|_3$$ which is quickly verified to be a metric.
		\end{definition}
		We can consider the completion of $\mathbb{Q}$ under the $3$-adic metric, known as the \emph{field of $3$-adic numbers} and denoted $\mathbb{Q}_3$. The addition and multiplication operations on $\mathbb{Q}_3$ are inherited from $\mathbb{Q}$ using the property that these operations on $\mathbb{Q}$ are uniformly continuous with respect to $3$-adic metric, and so extend to the completion.		
		
		Let $\mathbb{Z}_3$ be the set of $3$-adic integers, seen as a subset of $\mathbb{Q}_3$, given by the completion of the integers $\mathbb{Z}$ under the $3$-adic metric. The set $\mathbb{Z}_3$ forms a subring of $\mathbb{Q}_3$. The elements of $\mathbb{Z}_3$ are represented by sequences of digits $\epsilon_k\in\{0,1,2\}$ where $\epsilon_k$ is the $k$th digit of the $3$-adic integer $\alpha$ with unique standard expansion $\alpha=\sum_{k=0}^\infty\epsilon_k 3^k$. We denote the \emph{$n$-partial expansion} of $\alpha$ by $\alpha_n=\sum_{k=0}^n \epsilon_k 3^k$.

		\subsection{Uncountability of Isomorphism Classes of Direct Limits}\label{goodearl}
		For the benefit of the reader, this section is a self-contained proof of the result that will be needed in section \ref{chacon}. The proof was originally given in \cite{Goodearl}. It is fairly technical, and the material may be safely skipped, with the exception of the statement of Theorem \ref{countable}, and the definitions of the groups $G_{\alpha}$ and the equivalence relation $\sim$ on $\mathbb{Z}_3$.
		\subsubsection{The Goodearl-Rushing Method}
		Let $B_i=\begin{pmatrix}1 & i\\ 0 & 3\end{pmatrix}$ for $i\in\{0,1,2\}$. We will be considering direct systems consisting of sequences of these matrices. Let us fix a $3$-adic integer $\alpha\in\mathbb{Z}_3$ which has digits $\epsilon_n$ for $n\geq 0$ and let $G_{\alpha}=\displaystyle\lim_{\longrightarrow}(B_{\epsilon_n})$, the direct limit of the matrices $B_{\epsilon_n}$ acting on the group $\mathbb{Z}^2$.
		
		Let $V$ be a $2$-dimensional vector space over $\mathbb{Q}$. As the matrices $B_i$ are invertible over $\mathbb{Q}$, we can rewrite the above direct limit as a sequence of inclusions of rank-$2$ free abelian subgroups of $V$ such that the direct limit is then a union of these subgroups. We wish to do this in such a way that we keep track of the generating elements of the subgroups.
		
		Let $\{w_0,z_0\}$ be a $\mathbb{Q}$-basis of $V$ and define $A_{\alpha,0}=\langle w_0, z_0 \rangle$ to be the free abelian subgroup of $V$ generated by these elements. Set
		\begin{equation}\label{generators1}
		\begin{array}{rcl}
		w_n & = & w_0 \\
		z_n & = & 3^{-n}(z_0-\alpha_{n-1}w_0)
		\end{array}
		\end{equation}
		and similarly define $A_{\alpha,n}=\langle w_n,z_n \rangle$. It can be shown using the fact that $\alpha_n-\alpha_{n-1}=3^{n+1}\epsilon_n$ that		
		\begin{equation}\label{generators2}
		\begin{array}{rcl}
		z_n & = & 3z_{n+1}+\epsilon_n w_{n+1}
		\end{array}
		\end{equation}
		and so in particular $A_{\alpha, n}\subset A_{\alpha, n+1}$. Let $i_n\colon A_{\alpha,n}\to A_{\alpha,n+1}$ be the inclusion map.
		
		For fixed generators $a,b\in\mathbb{Z}^2$ define the group isomorphism $g_n\colon\mathbb{Z}^2\to A_{\alpha,n}$ by $g_n(ka + lb)=k w_n + l z_n$. It is easy to see using the relation in (\ref{generators2}) that $g_{n+1}\circ B_{\epsilon_n}=i_n\circ g_n$, meaning the diagram
		$$\begin{tikzpicture}[node distance=2cm, auto]
  		\node (00) {$A_{\alpha,0}$};
  		\node (10) [above of=00] {$\mathbb{Z}^2$};
  		\node (01) [right of=00] {$A_{\alpha,1}$};
  		\node (11) [right of=10] {$\mathbb{Z}^2$};
  		\node (02) [right of=01] {$A_{\alpha,2}$};
  		\node (12) [right of=11] {$\mathbb{Z}^2$};
  		\node (03) [right of=02] {$\cdots$};
  		\node (13) [right of=12] {$\cdots$};
  		\draw[->] (10) to node [swap] {$g_0$} (00);
  		\draw[->] (11) to node [swap] {$g_1$} (01);
  		\draw[->] (12) to node [swap] {$g_2$} (02);
  		\draw[->] (00) to node [swap] {$i_0$} (01);
  		\draw[->] (01) to node [swap] {$i_1$} (02);
  		\draw[->] (02) to node [swap] {$i_2$} (03);
  		\draw[->] (10) to node {$B_{\epsilon_0}$} (11);
  		\draw[->] (11) to node {$B_{\epsilon_1}$} (12);
  		\draw[->] (12) to node {$B_{\epsilon_2}$} (13);  
		\end{tikzpicture}$$
commutes, and so we may conclude that $$G_{\alpha}=\lim_{\longrightarrow}(B_{\epsilon_n}) \cong \lim_{\longrightarrow}(A_{\alpha,n},i_n)\cong\bigcup_{n\in\mathbb{N}}A_{\alpha,n}.$$
We will write $A_{\alpha}$ to denote $\bigcup A_{\alpha,n}$.
		\begin{prop}
		For any $\alpha\in\mathbb{Z}_3$, the group $A_{\alpha}$ is not finitely generated.
		\end{prop}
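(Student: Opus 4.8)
The plan is to argue by contradiction, exploiting the fact that $A_\alpha$ is presented as an increasing union of the subgroups $A_{\alpha,n}$. First I would record the elementary observation that if a group $G$ is the union of a nested chain $G_0 \subseteq G_1 \subseteq \cdots$ of subgroups and $G$ happens to be finitely generated, then $G = G_N$ for some $N$: each of the finitely many generators lies in some $G_{n_j}$, and setting $N = \max_j n_j$ places all generators inside $G_N$, whence $G = \langle \text{generators} \rangle \subseteq G_N \subseteq G$. Applying this to $A_\alpha = \bigcup_n A_{\alpha,n}$, if $A_\alpha$ were finitely generated we would have $A_\alpha = A_{\alpha,N}$ for some $N$, so that the inclusion $i_N\colon A_{\alpha,N} \to A_{\alpha,N+1}$ would necessarily be onto.

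It therefore suffices to show that every inclusion $i_n\colon A_{\alpha,n}\to A_{\alpha,n+1}$ is proper, since then $A_{\alpha,N} \subsetneq A_{\alpha,N+1} \subseteq A_\alpha$ contradicts $A_\alpha = A_{\alpha,N}$. For this I would compute the index $[A_{\alpha,n+1}:A_{\alpha,n}]$. Using $w_n = w_{n+1}$ together with the relation (\ref{generators2}), namely $z_n = 3z_{n+1} + \epsilon_n w_{n+1}$, the image of $A_{\alpha,n}$ inside $A_{\alpha,n+1}=\langle w_{n+1},z_{n+1}\rangle$ is $\langle w_{n+1},\, 3z_{n+1}+\epsilon_n w_{n+1}\rangle = \langle w_{n+1},\, 3z_{n+1}\rangle$. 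Hence $A_{\alpha,n+1}/A_{\alpha,n}$ is cyclic of order $3$, generated by the image of $z_{n+1}$; equivalently, the change-of-basis matrix expressing $(w_n,z_n)$ in terms of $(w_{n+1},z_{n+1})$ is exactly $B_{\epsilon_n}$, with $\det B_{\epsilon_n}=3$. In either description the index equals $3 > 1$, so each inclusion is strict.

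Combining the two steps yields the contradiction and completes the proof. I do not expect any genuine obstacle here: the only real content is the index computation, which is immediate from $\det B_i = 3$ and, crucially, holds uniformly over all $\alpha\in\mathbb{Z}_3$ because it is independent of the digits $\epsilon_n$. The underlying reason is structural: $A_\alpha$ is a countable, torsion-free, rank-$2$ abelian group realised as a strictly increasing union of rank-$2$ lattices, each of index $3$ in the next, and such a group cannot be finitely generated — in spirit a rank-$2$ twist of the familiar group $\mathbb{Z}[\tfrac{1}{3}] = \bigcup_n 3^{-n}\mathbb{Z}$, itself the union of the strict chain $\mathbb{Z} \subsetneq \tfrac{1}{3}\mathbb{Z} \subsetneq \tfrac{1}{9}\mathbb{Z} \subsetneq \cdots$.
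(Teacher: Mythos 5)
Your proof is correct, but it takes a different route from the paper's. The paper's argument is a one-line reduction: it applies the projection $V\to\mathbb{Q}$ sending $w_0\mapsto 0$, $z_0\mapsto 1$, observes that the image of $A_\alpha$ is exactly $\mathbb{Z}[\tfrac{1}{3}]$ (since $z_n\mapsto 3^{-n}$), and invokes the standard fact that $\mathbb{Z}[\tfrac{1}{3}]$ is not finitely generated together with the fact that a quotient of a finitely generated group is finitely generated. You instead work intrinsically with the chain $A_{\alpha,0}\subseteq A_{\alpha,1}\subseteq\cdots$: you prove the general lemma that a finitely generated union of a nested chain must stabilise, and then show each inclusion is proper by the index computation $[A_{\alpha,n+1}:A_{\alpha,n}]=\det B_{\epsilon_n}=3$, which is correct since $\{w_{n+1},z_{n+1}\}$ is genuinely a free basis of $A_{\alpha,n+1}$ (the vectors are $\mathbb{Q}$-independent in $V$) and $A_{\alpha,n}=\langle w_{n+1},3z_{n+1}\rangle$ inside it. The paper's approach is shorter but outsources the real content to the known non-finite-generation of $\mathbb{Z}[\tfrac{1}{3}]$, whose usual proof is precisely your strictly-increasing-chain argument in rank one; your version is fully self-contained and makes the index-3 structure of the direct system explicit, at the cost of a few more lines. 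Both are sound, and as you note they hold uniformly in $\alpha$ since neither the projected image nor the index depends on the digits $\epsilon_n$.
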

		\begin{proof}
		Consider the projection homomorphism $V\to\mathbb{Q}\colon w_0\mapsto 0, z_0\mapsto 1$. This projection restricted to the subgroup $\bigcup A_{\alpha,n}$ has image equal to the additive group of triadic integers $\mathbb{Z}[\frac{1}{3}]=\{a3^{-n}\mid a\in\mathbb{Z},n\in\mathbb{N}\}$ which is not finitely generated. It follows that $A_{\alpha}$ is infinitely generated.
		\end{proof}
		\subsubsection{Determining Isomorphisms}
		We now ask the question, given $\alpha=\ldots\epsilon_2\epsilon_1\epsilon_0$ and $\alpha'=\ldots\epsilon'_2\epsilon'_1\epsilon'_0$, when precisely is $A_{\alpha}$ isomorphic to $A_{\alpha'}$?
		\begin{thm}[Goodearl-Rushing]\label{countable}
		Let $\sim$ be the equivalence relation on $\mathbb{Z}_3$ given by $\alpha \sim \alpha'$ if and only if $G_{\alpha} \cong G_{\alpha'}$. The equivalence classes of $\sim$ are all countable.
		\end{thm}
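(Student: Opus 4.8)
The plan is to translate the abstract isomorphism question into a question about orbits of a countable group acting on the subgroups of $V$, and then to show separately that the assignment $\alpha\mapsto A_\alpha$ is injective. First I would record the two structural facts about $A_\alpha=\bigcup_n A_{\alpha,n}$: it is torsion-free (being a subgroup of the $\mathbb{Q}$-vector space $V$) and it spans $V$ over $\mathbb{Q}$ (since already $A_{\alpha,0}=\langle w_0,z_0\rangle$ does). Consequently the natural map $A_\alpha\otimes_{\mathbb{Z}}\mathbb{Q}\to V$ is an isomorphism, so any group homomorphism $A_\alpha\to A_{\alpha'}\subseteq V$ extends, by tensoring with $\mathbb{Q}$, to a $\mathbb{Q}$-linear map $V\to V$ agreeing with it on $A_\alpha$, and an isomorphism extends to a linear isomorphism. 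Fixing the basis $\{w_0,z_0\}$ identifies $\mathrm{GL}(V)$ with $\mathrm{GL}_2(\mathbb{Q})$, and shows that $G_\alpha\cong G_{\alpha'}$ (equivalently $A_\alpha\cong A_{\alpha'}$) if and only if $M A_\alpha=A_{\alpha'}$ for some $M\in\mathrm{GL}_2(\mathbb{Q})$. The point of this reformulation is that $\mathrm{GL}_2(\mathbb{Q})$ is countable, so the orbit $\{M A_\alpha : M\in\mathrm{GL}_2(\mathbb{Q})\}$ is a countable set of subgroups of $V$.

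Next I would prove that $\alpha\mapsto A_\alpha$ is injective as a map into subgroups of $V$. Writing a general point of $V$ in coordinates $xw_0+yz_0$, a direct computation from (\ref{generators1}) shows that a point lies in $A_{\alpha,n}$ exactly when $y=b3^{-n}$ and $x=a-y\alpha_{n-1}$ for some $a,b\in\mathbb{Z}$; thus the $y$-coordinates occurring in $A_\alpha$ are precisely $\mathbb{Z}[\frac{1}{3}]$. For a fixed nonzero $y=c3^{-m}$ with $3\nmid c$, the defining relation $\alpha_n-\alpha_{n-1}=\epsilon_n3^n$ gives $y\alpha_n\equiv y\alpha_{m-1}\pmod{\mathbb{Z}}$ for all $n\geq m$, so the fibre $\{x : xw_0+yz_0\in A_\alpha\}$ is exactly the coset $-y\alpha_{m-1}+\mathbb{Z}$. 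Taking $y=3^{-m}$ then recovers $\alpha_{m-1}\bmod 3^m$, and hence the digits $\epsilon_0,\dots,\epsilon_{m-1}$, from the subset $A_\alpha\subseteq V$ alone; letting $m\to\infty$ recovers $\alpha$ entirely. Therefore distinct $3$-adic integers give distinct subgroups of $V$.

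Finally I would combine the two facts. The $\sim$-class of $\alpha$ maps into the orbit $\{MA_\alpha : M\in\mathrm{GL}_2(\mathbb{Q})\}$ via $\alpha'\mapsto A_{\alpha'}$, since $A_{\alpha'}\cong A_\alpha$ forces $A_{\alpha'}$ to have the form $MA_\alpha$ by the first paragraph; and this map is injective by the second paragraph. As the orbit is countable, the class is countable, which is the assertion of the theorem. I expect the main obstacle to be the fibre computation establishing injectivity: one must check carefully that the coset over each $y$ stabilises in $n$ — this is where $\alpha_n-\alpha_{n-1}=\epsilon_n3^n$ and the coprimality of the numerator of $y$ to $3$ are used — so that the subgroup genuinely remembers every digit of $\alpha$. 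The extension-to-$\mathrm{GL}_2(\mathbb{Q})$ step is standard once full rank and torsion-freeness are noted, and the concluding count is then immediate.
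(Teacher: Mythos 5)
Your proof is correct, and after the common first step it takes a genuinely different route from the paper. Both arguments begin by extending an isomorphism $A_\alpha\to A_{\alpha'}$ to an element of $\mathrm{GL}(V)\cong\mathrm{GL}_2(\mathbb{Q})$ using torsion-freeness and full rank. The paper then works quantitatively with the matrix of $\tilde\varphi$: it tracks the minimal level $k(n)$ with $\tilde\varphi(z_n)\in A_{\alpha',k(n)}$, shows the $z'$-coefficient $b_n$ is prime to $3$, and passes to a $3$-adic limit to extract the explicit relation $\alpha'=(r_z-\alpha r_w)/(\alpha s_w-s_z)$, so that the class is parametrized by four integers. You instead note that the $\mathrm{GL}_2(\mathbb{Q})$-orbit of $A_\alpha$ among subgroups of $V$ is countable simply because $\mathrm{GL}_2(\mathbb{Q})$ is, and reduce the theorem to injectivity of $\alpha\mapsto A_\alpha$, which you establish by the fibre computation: the $x$-coordinates lying over $y=3^{-m}$ form exactly the coset $-3^{-m}\alpha_{m-1}+\mathbb{Z}$, so $A_\alpha$ remembers $\alpha_{m-1}$ for every $m$ and hence all of $\alpha$. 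That computation checks out --- the stabilisation of the coset follows from $y(\alpha_n-\alpha_{n-1})=c3^{n-m}\epsilon_n\in\mathbb{Z}$ for $n\geq m$, and your normalisation $\alpha_n-\alpha_{n-1}=\epsilon_n3^n$ is the correct one for $\alpha_n=\sum_{k=0}^n\epsilon_k3^k$ (the paper's displayed $3^{n+1}\epsilon_n$ is a typo; your version is what makes (\ref{generators2}) hold). The trade-off: your argument is shorter and more robust, replacing the paper's most delicate steps (the $3\nmid b_n$ divisibility argument and the $3$-adic convergence analysis) with elementary congruences, but it forgoes the explicit M\"obius-type formula relating $\alpha'$ to $\alpha$, which the paper obtains as a by-product and which describes the equivalence class more precisely than mere countability.
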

		\begin{proof}
		Let us suppose that $G_\alpha\cong G_{\alpha'}$ for a particular pair $\alpha,\alpha'\in\mathbb{Z}_3$, then $A_{\alpha}\cong A_{\alpha'}$, and let $\varphi\colon A_{\alpha}\to A_{\alpha'}$ be a group isomorphism. We note that $A_{\alpha} \otimes_{\mathbb{Z}} \mathbb{Q}\cong A_{\alpha'} \otimes_{\mathbb{Z}} \mathbb{Q}\cong V$ and so in particular $\varphi$ extends uniquely to an automorphism $\tilde{\varphi}\colon V\to V$ of vector spaces. Let us represent $\tilde{\varphi}$ by its associated ($\mathbb{Q}$-invertible) matrix $\dfrac{1}{t}\begin{pmatrix}r_w & r_z\\ s_w & s_z\end{pmatrix}$ with respect to the basis $\{w_0,z_0\}$ of $V$, for integers $r_w,r_z,s_w,s_z,t$.
		
		We see that
		\begin{eqnarray}
		\tilde{\varphi}(w_0) & = &\frac{r_w}{t}w_0+\frac{s_w}{t}z_0\\
		\tilde{\varphi}(z_0) & = & \frac{r_z}{t}w_0+\frac{s_z}{t}z_0
		\end{eqnarray}
		and using (\ref{generators1}) we get for all $n\geq 0$ that
		\begin{eqnarray}\label{compare1}
		\tilde{\varphi}(z_n) & = & 3^{-n} \left(\frac{r_z}{t} - \alpha_{n-1}\frac{r_w}{t}\right) w_0 + 3^{-n} \left(\frac{s_z}{t}-\alpha_{n-1}\frac{s_w}{t}\right)z_0.
		\end{eqnarray}
		
		Given that $\tilde{\varphi}(A_{\alpha})=A_{\alpha'}$, there must exist a natural number $t\geq 0$ such that $\tilde{\varphi}(w_0)\in A_{\alpha',t}$ and $\tilde{\varphi}(z_0)\in A_{\alpha',t}$. Let $k(n)$ be the minimal natural number such that $\tilde{\varphi}(w_n)$ and $\tilde{\varphi}(z_n)$ are elements of $A_{\alpha',k(n)}$. Recalling (\ref{generators2}) we can also see that $\tilde{\varphi}(w_n)$ and $\tilde{\varphi}(z_n)$ are in $A_{\alpha',k(n+1)}$ and so in particular $k(n)\leq k(n+1)$ and so $(k(i))_{i\geq 0}$ is a non-decreasing sequence.
		
		Supposing $(k(i))_{i\geq 0}$ was bounded by some natural number $k$, we would have that $\tilde{\varphi}(w_n)$ and $\tilde{\varphi}(z_n)$ are in $A_{\alpha',k}$ for all $n\geq 0$ and so $\tilde{\varphi}(A_{\alpha})$ is a subgroup of $A_{\alpha',k}$. But note, $A_{\alpha',k}$ is finitely generated, and as $\tilde{\varphi}$ is a bijection, this would mean a finitely generated abelian group contained an infinitely generated subgroup. This can clearly not be the case and so we conclude that $(k(i))_{i\geq 0}$ must be unbounded.
		
		As we know $\tilde{\varphi}(z_n)\in A_{\alpha',k(n)}$, we then have integers $a_n,b_n$ such that
		\begin{eqnarray}\label{compare2}
		\tilde{\varphi}(z_n)=a_nw'_{k(n)}+b_nz'_{k(n)}.
		\end{eqnarray}
		Suppose $b_n$ is divisible by $3$ and so $b_n=3c_n$ for some integer $c_n$, then we find by substituting for $w'_n$ and $z'_n$ using (\ref{generators2}) that
		\begin{equation*}
		\tilde{\varphi}(z_n)=a_nw'_{k(n)-1}+c_n(z'_{k(n)-1}-\epsilon'_{k(n)-1}w'_{k(n)-1})
		\end{equation*}
		which would imply that $\tilde{\varphi}(z_n)\in A_{\alpha',k(n)-1}$, contradicting the minimality of $k(n)$. It follows that $b_n$ is not divisible by $3$.
		
		Comparing (\ref{compare1}) and (\ref{compare2}) we find that		
		$$
		3^{-n} \left(\frac{r_z}{t} - \alpha_{n-1}\frac{r_w}{t}\right) w_0 + 3^{-n} \left(\frac{s_z}{t}-\alpha_{n-1}\frac{s_w}{t}\right)z_0 = a_nw'_{k(n)}+b_nz'_{k(n)}
		$$		
		which, after substituting for $w'_{k(n)}$ and $z'_{k(n)}$ using (\ref{generators1}), becomes		
		$$
		3^{-n} \left(\frac{r_z}{t} - \alpha_{n-1}\frac{r_w}{t}\right) w_0 + 3^{-n} \left(\frac{s_z}{t}-\alpha_{n-1}\frac{s_w}{t}\right)z_0 = a_n w_0 + b_n 3^{-k(n)} (z_0-\alpha'_{k(n)-1}w_0).
		$$
		Picking out $w_0$ and $z_0$ components gives us
		\begin{equation}
		\begin{array}{rcl}\label{algebra}
		3^{-n}(r_z-\alpha_{n-1}r_w)/t & = & 3^{-k(n)}(3^{k(n)}a_n-\alpha'_{k(n)-1}b_n)\\
		3^{-n}(s_z-\alpha_{n-1}s_w)/t & = & 3^{-k(n)}b_n .
		\end{array}
		\end{equation}
		Cross-multiplying the equations in (\ref{algebra}) gives
		$$
		(r_z-\alpha_{n-1}r_w)b_n=(s_z-\alpha_{n-1}s_w)(3^{k(n)}a_n-\alpha'_{k(n)-1}b_n).
		$$
		As it is not a multiple of $3$ we can divide through by $b_n$ to give
		\begin{equation}\label{convseq}
		r_z-\alpha_{n-1}r_w=(s_z-\alpha_{n-1}s_w)(3^{k(n)}(a_n/b_n)-\alpha'_{k(n)-1}).
		\end{equation}
		We can now take the limit of the sequence of equations given in (\ref{convseq}) as $n\to\infty$ in the $3$-adic metric on $\mathbb{Q}_3$. Without loss of generality suppose $a_n\neq 0$ and write $a_n=3^{q(n)}d_n$ for an integer $d_n$ not divisible by $3$ and $q(n)\geq 0$ some natural number. Then it is clear that $$|3^{k(n)}a_n/b_n|_3=|3^{k(n)+q(n)}d_n/b_n|_3=3^{-(k(n)+q(n))}$$ and so, because $k(n)$ is a strictly increasing sequence of natural numbers, as $n$ tends to $\infty$, this valuation must tend to $0$. So $$\displaystyle\lim_{n\to\infty} d_3(3^{k(n)}a_n/b_n,0) = 0.$$ It follows that $\displaystyle\lim_{n\to\infty} 3^{k(n)}a_n/b_n=0$.
		
		Taking the limit of (\ref{convseq}) then tells us that
		$$\begin{array}{rrcl}
		&r_z-\alpha r_w & = & (\alpha s_w-s_z)\alpha'\\
		\implies & \alpha' & = & \dfrac{r_z-\alpha r_w}{\alpha s_w-s_z}
		\end{array}		
		$$
		and so we may finally conclude that for any given $\alpha\in\mathbb{Z}_3$, there are at most a countable number of distinct $3$-adic integers $\alpha'\in\mathbb{Z}_3$ such that $G_{\alpha}\cong G_{\alpha'}$, given by varying $r_w,r_z,s_w,s_z$ over the integers in the above equation. It follows that the $\sim$-equivalence classes on $\mathbb{Z}_3$ are all countable.
		\end{proof}

	\section{Mixed Substitutions}\label{Section:MixedSubstitutions}
		\subsection{Definitions}
		We now shift our attention to one-dimensional mixed substitution tilings. We would like to be able to compute the \Cech cohomology of the inverse limits of certain $1$-dimensional CW complexes in the style of Barge-Diamond \cite{BargeDiamond}, under maps induced by sequences of symbolic substitutions. In order to do this, we need to develop the theory of mixed substitution tiling spaces in this setting. Much of this theory, with regard to the Anderson-Putnam complexes \cite{AndersonPutnam}, was originally formulated by G\"{a}hler and Maloney \cite{GahlerMaloney}, and our notation will be closely based on theirs.
		
		Let $\mathcal{A}=\{a^1, a^2, \ldots, a^l\}$ be a finite alphabet on $l$ symbols and for positive integer $n$, let $\mathcal{A}^n$ be the set of words of length $n$ using symbols from $\mathcal{A}$. Denote the union of these by $\mathcal{A}^*=\bigcup_{n\geq 1}\mathcal{A}^n$. This set $\mathcal{A}^*$ forms a free semigroup under concatenation of words.
		\begin{definition}
		A \emph{substitution} $\phi$ on $\mathcal{A}$ is a function $\phi \colon \mathcal{A} \to \mathcal{A}^*$. We can extend the substitution $\phi$ in a natural way to a semigroup homomorphism $\phi \colon \mathcal{A}^* \to \mathcal{A}^*$ given, for a word $w = w_1\ldots w_n \in \mathcal{A}^n$, by setting $\phi(w) = \phi(w_1) \ldots \phi(w_n)$.
		\end{definition}
		We may further extend the above definition of a substitution to bi-infinite sequences $\phi\colon\mathcal{A}^{\mathbb{Z}}\to\mathcal{A}^{\mathbb{Z}}$. For a bi-infinite sequence $\mathcal{T}\in\mathcal{A}^{\mathbb{Z}}$, with $\mathcal{T} = \ldots a_{-2} a_{-1} \cdot a_0 a_1 a_2 \ldots$ we set $$\phi(\mathcal{T}) = \ldots \phi(a_{-2}) \phi(a_{-1}) \cdot \phi(a_0) \phi(a_1) \phi(a_2) \ldots$$ with the dot $\cdot$ representing the separator of the $(-1)$st and $0$th component of the respective sequences.
		
		If $\mathcal{A}=\{a^1,a^2,\ldots,a^l\}$ is an alphabet with a substitution $\phi\colon\mathcal{A} \to \mathcal{A}^*$, then $\phi$ has an associated substitution matrix $M_{\phi}$ of dimension $l\times l$ given by setting $m_{ij}$, the $i,j$ entry of $M_{\phi}$, to be the number of times that the letter $a^i$ appears in the word $\phi(a^j)$.
		\begin{definition}
		A substitution $\phi \colon \mathcal{A} \to \mathcal{A}^*$ is called \emph{primitive} if there exists a positive natural number $p$ such that the matrix $M_{\phi}^p$ has strictly positive entries. Equivalently, if there exists a positive natural number $p$ such that for all $a,a'\in\mathcal{A}$ the letter $a'$ appears in the word $\phi^p(a)$.
		\end{definition}
		Let $F=\{\phi_0, \phi_1,\ldots, \phi_k\}$ be a finite set of substitutions on $\mathcal{A}$. Consider an infinite sequence $s=(s_0, s_1, s_2, \ldots)\in\{0,1,\ldots, k\}^{\mathbb{N}}$. For a fixed alphabet $\mathcal{A}$, we call a pair $(F,s)$ a \emph{mixed substitution system}. For natural numbers $k\leq l$, let $M_{s[k,l]}=M_{s_k} M_{s_{k+1}}\cdots M_{s_{l-1}} M_{s_l}$ be the associated substitution matrix of the substitution $\phi_{s[k,l]}=\phi_{s_k} \phi_{s_{k+1}} \cdots \phi_{s_{l-1}} \phi_{s_l}$.
		\begin{definition}
		The mixed substitution system $(F,s)$ is called \emph{weakly primitive} if for all natural numbers $n$, there exists a positive natural number $k$ such that the matrix $M_{s[n,n+k]}$ has strictly positive entries.
		
		The mixed substitution system $(F,s)$ is called \emph{strongly primitive} if there exists a positive natural number $k$ such that for all natural numbers $n$, the matrix $M_{s[n,n+k]}$ has strictly positive entries.
		\end{definition}
		\begin{rem}
		Strongly primitive substitution systems are referred to simply as \emph{primitive} in \cite{GahlerMaloney} and \emph{bounded primitive} in \cite{PachechoVilarinho}. There are still other conventions for this notation \cite{DurandLeroyRichomme}. We take the convention that whenever we say a system is primitive, we will mean weakly primitive in the sense defined above.
		\end{rem}
		To a mixed substitution system $(F,s)$ we will associate a topological space $\Omega_{F,s}$ called the \emph{continuous hull} or \emph{tiling space} of the mixed substitution system $(F,s)$. Let $\mathcal{A}^{\mathbb{Z}}$ be the space of all bi-infinite sequences of symbols in $\mathcal{A}$ with the usual metric.
		\begin{definition}
		Let $(F,s)$ be as above and let $\mathcal{T}\in\mathcal{A}^{\mathbb{Z}}$ be a bi-infinite sequence $$\ldots a_{-2} a_{-1} a_0 a_1 a_2\ldots$$ of symbols $a_i\in\mathcal{A}$. We say $\mathcal{T}$ is \emph{admitted} by $(F,s)$ if for all $t\in\mathbb{Z}$ and all $u\geq 0$, there exists a $k\geq 0$ and a letter $a\in\mathcal{A}$ such that the string $a_t a_{t+1} \ldots a_{t+u-1} a_{t+u}$ appears as a substring of the word $\phi_{s[0,k]}(a)$. We call the subspace of $\mathcal{A}^\mathbb{Z}$ containing all bi-infinite sequences admitted by $(F,s)$ the \emph{discrete hull} of $(F,s)$ and denote it by $\Sigma_{F,s}$.
		
		We say a word $w$ is admitted by $(F,s)$ if it appears as a subword of an $(F,s)$-admitted bi-infinite sequence. We denote the set of length-$n$ $(F,s)$-admitted words by $\mathcal{L}^n_{F,s}$. The set $\mathcal{L}_{F,s}=\bigcup \mathcal{L}^n_{F,s}$ is referred to as the \emph{language} of $(F,s)$.
		\end{definition}	
		\begin{rem}
		It is worth remarking that a pair of theorems of Durand provides a close link between the strength of primitivity of a mixed substitution system and the growth rate of its repetitivity function. We do not explain the terms in these statements but instead refer the reader to \cite{Durand1,Durand2}.
		\begin{thm}[Durand]
		A sequence is repetitive (equivalently its shift is uniformly recurrent) if and only if it is admitted by a weakly primitive mixed substitution system.
		\end{thm}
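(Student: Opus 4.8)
The plan is to prove the two implications separately, and I expect the forward direction (repetitive $\Rightarrow$ admitted) to be substantially harder than its converse.

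For the converse, suppose $\mathcal{T}$ is admitted by a weakly primitive system $(F,s)$, and fix an arbitrary factor $w$ of $\mathcal{T}$; the goal is to show $w$ recurs with bounded gaps. By admissibility, $w$ occurs in $\phi_{s[0,k]}(a)$ for some index $k$ and some $a\in\mathcal{A}$. Applying weak primitivity at the index $k+1$ yields a $K$ with $M_{s[k+1,k+1+K]}$ strictly positive, so $a$, and therefore $w$, occurs in $\phi_{s[0,k+1+K]}(b)$ for every $b\in\mathcal{A}$. Let $L$ be the maximal length of the words $\phi_{s[0,k+1+K]}(b)$. Now any factor $v$ of $\mathcal{T}$ lies inside some image $\phi_{s[0,j]}(c)$ by admissibility, and after enlarging $j$ if necessary (again using weak primitivity) this image factors as a concatenation of blocks of the form $\phi_{s[0,k+1+K]}(b)$, each of length at most $L$ and each containing $w$. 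Hence $w$ meets every window of length $2L$, and $\mathcal{T}$ is repetitive.

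For the forward direction I would use Durand's \emph{return word} and \emph{derived sequence} machinery. Given a factor $u$ of $\mathcal{T}$, uniform recurrence forces $u$ to recur with bounded gaps, so there are only finitely many distinct return words $\rho_1,\dots,\rho_m$ spanning consecutive occurrences of $u$; coding $\mathcal{T}$ by this finite return alphabet produces a derived sequence $\mathcal{D}_u(\mathcal{T})$ that is again uniformly recurrent. Iterating along a nested family of factors $u_0\prec u_1\prec\cdots$ produces a tower $\mathcal{T}=\mathcal{T}^{(0)},\mathcal{T}^{(1)},\mathcal{T}^{(2)},\dots$, and expressing each return word of level $n+1$ as the concatenation of level-$n$ return words it determines defines a substitution $\phi_n$. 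One then verifies that $\mathcal{T}$ equals the limit of $\phi_0\phi_1\cdots\phi_{n-1}(\mathcal{T}^{(n)})$ in the appropriate sense, so that $\mathcal{T}$ is admitted by the directive sequence $s=(0,1,2,\dots)$ over the family $\{\phi_n\}$.

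The heart of the argument, and the step I expect to be the main obstacle, is verifying \emph{weak primitivity} of the constructed system: for every level $n$ there must be a $K$ with $M_{s[n,n+K]}$ strictly positive. This is precisely where uniform recurrence is spent, since every level-$n$ letter corresponds to a factor of $\mathcal{T}$ and every factor of a uniformly recurrent sequence reappears within a bounded distance, forcing a sufficiently deep composition $\phi_n\cdots\phi_{n+K}$ to expand every letter into a word containing all level-$n$ letters. A secondary but genuinely delicate point is that the return alphabets may grow without bound, so in general the family is only \emph{countable} and the $\phi_n$ pass between distinct alphabets; reconciling this with the single fixed alphabet and finite $F$ of our formal definition requires either specialising to the linearly recurrent case, where the number of return words is uniformly bounded, or a coding argument embedding the whole tower into one alphabet. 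I would finally arrange that the directive sequence is everywhere growing, so that the image lengths tend to infinity, by passing to return words of sufficiently long factors $u_n$ at each stage.
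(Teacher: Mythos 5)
The paper does not prove this statement: it is quoted from Durand with a pointer to \cite{Durand1,Durand2} (and to \cite{DurandLeroyRichomme}), so there is no in-paper argument to compare against. The closest thing is the proposition that weak primitivity implies minimality of $\Omega_{F,s}$, and your converse direction is essentially that proof: locate $w$ in some $\phi_{s[0,k]}(a)$, use positivity of $M_{s[k+1,k+1+K]}$ to put $w$ inside every $\phi_{s[0,k+1+K]}(b)$, and decompose long enough images $\phi_{s[0,j]}(c)=\phi_{s[0,k+1+K]}\bigl(\phi_{s[k+2+K,j]}(c)\bigr)$ into blocks of bounded length each containing $w$. That half is correct and complete modulo routine edge-of-window details.

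The forward direction, however, has a genuine gap, and it sits exactly where you flag it. The return-word/derived-sequence tower produces morphisms $\phi_n\colon A_{n+1}^*\to A_n^*$ between alphabets $A_n$ of return words whose cardinalities are finite at each level but in general unbounded in $n$ for a merely uniformly recurrent sequence; consequently the $\phi_n$ are pairwise distinct morphisms between varying alphabets. The paper's definition of a mixed substitution system demands a \emph{finite} set $F$ of substitutions on a \emph{single fixed} alphabet $\mathcal{A}$ (indeed the admitted sequence must itself be a sequence over that same $\mathcal{A}$), so your construction does not yet produce an object of the required type. Your two proposed escapes do not close this: specialising to linear recurrence (where return alphabets are uniformly bounded) proves the paper's Theorem 2.8 on linearly recurrent sequences, not this statement; and the ``coding argument embedding the whole tower into one alphabet'' is precisely the nontrivial content of Durand's theorem --- one must recode each level into a bounded alphabet and decompose the resulting morphisms into a finite generating family while preserving both the admitted language and weak primitivity --- and it is asserted rather than carried out. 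Until that step is supplied, the forward implication is an outline of the right strategy rather than a proof.
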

		\begin{thm}[Durand]\label{durand2}
		A sequence is linearly repetitive (equivalently its shift is linearly recurrent) if and only if it is admitted by a strongly primitive and proper mixed substitution system.
		\end{thm}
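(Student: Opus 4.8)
The plan is to prove both implications with the machinery of \emph{return words} and derived (induced) sequences, which is the natural framework for Durand's characterisation. For the proof I adopt the standard meanings of the unexplained terms: a sequence is \emph{linearly repetitive} if there is a constant $C$ such that every word of $\mathcal{L}_{F,s}$ of length $n$ occurs in every factor of length $Cn$, and a mixed system $(F,s)$ is \emph{proper} if there are letters $l,r\in\mathcal{A}$ so that for every $i$ and every $a\in\mathcal{A}$ the word $\phi_i(a)$ begins with $l$ and ends with $r$.

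First I would treat the implication that a strongly primitive, proper system admits only linearly repetitive sequences. Since $F$ is finite, the products $M_{s[n,n+k]}$ range over a finite set of matrices, so strong primitivity gives uniform two-sided control: the lengths $|\phi_{s[n,m]}(a)|$ grow geometrically in $m-n$ and, at a fixed level, the lengths for distinct letters stay comparable with constants independent of $n$ (bounded distortion). Properness together with strong primitivity yields recognisability, so an admitted sequence decomposes uniquely into level-$m$ super-tiles $\phi_{s[0,m]}(a)$. Given a word $w$ of length $n$, I would choose $m$ so that these super-tiles have length comparable to $n$; then any occurrence of $w$ meets a bounded number of consecutive super-tiles, and the configuration around it is encoded by a bounded-length factor of the level-$m$ sequence. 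Strong primitivity forces every such factor to reappear within a bounded number of level-$m$ letters, and properness guarantees the super-tile boundaries align, so $w$ recurs within a window of length at most $Cn$. This gives linear repetitivity.

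The converse, that a linearly repetitive sequence $\mathcal{T}$ is admitted by a strongly primitive proper system, is the substantial direction and I would build the system from return words. The key structural input is that linear repetitivity forces a uniform bound $K$ on the number of return words to any factor, together with the comparability of return-word lengths to the length of the factor. Coding $\mathcal{T}$ by its return words to a nested sequence of prefixes $w_0, w_1, w_2, \dots$ produces derived sequences $\mathcal{D}_j$ over alphabets of size at most $K$, and the passage from $\mathcal{D}_{j+1}$ to $\mathcal{D}_j$---writing each return word to $w_{j+1}$ as a concatenation of return words to $w_j$---defines a substitution $\sigma_j$. After relabelling alphabets of bounded size one extracts a finite family $F$ and a sequence $s$ admitting $\mathcal{T}$; the uniform return-word bounds make each $\sigma_j$ (or a bounded product of consecutive ones) have a strictly positive, uniformly bounded matrix, giving strong primitivity. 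Finally, the standard proper-isation trick---shifting the cut points so that every image begins and ends with a fixed return word---produces a proper system admitting the same sequence without destroying strong primitivity.

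The hard part is the converse, and specifically two points. One must first show that linear repetitivity genuinely yields the uniform bound $K$ on return words and the comparability of their lengths; this is where the linear-recurrence constant is used essentially. The more delicate issue is then finiteness of the family $F$: a priori the substitutions $\sigma_j$ could exhibit unboundedly many distinct combinatorial shapes, and one must use the quantitative linear-recurrence estimates to bound not merely the alphabet sizes but how each longer return word factors into shorter ones, so that after normalisation the $\sigma_j$ really do fall into a finite set. I refer to \cite{Durand1,Durand2} for the complete arguments.
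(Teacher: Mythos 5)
The paper offers no proof of this statement to compare against: it is quoted as a theorem of Durand inside a remark that explicitly declines to define the terms involved and refers the reader to \cite{Durand1,Durand2}. On its own merits, your outline follows the route Durand actually takes --- a supertile/bounded-distortion estimate for the direction from strong primitivity and properness to linear repetitivity, and return words with derived sequences for the converse --- so the strategy is the right one. But as written it is a plan rather than a proof. The two issues you single out at the end (that linear recurrence uniformly bounds the number of return words to a factor and pins their lengths to within constants of the factor's length, and that the derived substitutions $\sigma_j$ fall into a finite set after relabelling) are not loose ends to be tidied afterwards: they are the substantive content of the theorem, and deferring them to \cite{Durand1,Durand2} leaves your write-up asserting exactly what the paper's citation already asserts.

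Two smaller points on the easy direction. You do not need recognisability there --- the mere existence of a decomposition of the admitted sequence into level-$m$ supertiles suffices, and uniqueness of that decomposition is irrelevant; moreover ``properness plus strong primitivity implies recognisability'' is itself a nontrivial claim (and false for periodic sequences), so it should not be invoked casually. What properness actually buys you in that direction is control of two-letter words across supertile boundaries: every boundary reads $r\,l$ for the distinguished last and first letters, so recurrence of length-$n$ words reduces to recurrence of single letters at a higher level, which strong primitivity provides. Finally, Durand's notion of proper lets the distinguished first and last letters depend on the level; for a finite family $F$ your global version is an inessential strengthening, but it is worth stating the definition you are actually proving the theorem for.
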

		\end{rem}	
		Let $\sigma\colon \mathcal{A}^{\mathbb{Z}} \to \mathcal{A}^{\mathbb{Z}}$ be the left shift map and note that $\sigma|_{\Sigma_{F,s}}$, the restriction of $\sigma$ to $\Sigma_{F,s}$, has image equal to $\Sigma_{F,s}$ and so is a homeomorphism of the discrete hull.
		\begin{definition}
		Let $\Omega_{F,s}$ be the suspension of the restricted left shift $\sigma|_{\Sigma_{F,s}}$, given by $$\Omega_{F,s}=(\Sigma_{F,s}\times [0,1])/{\sim}$$ where $(\mathcal{T},1)\sim(\sigma(\mathcal{T}),0)$. We call $\Omega_{F,s}$ the \emph{continuous hull} or \emph{tiling space} of the mixed substitution system $(F,s)$.
		\end{definition}
		For a bi-infinite sequence $\mathcal{T}=\ldots a_{-2} a_{-1} \cdot a_0 a_1 a_2\ldots \in \Sigma_{F,s}$ and real number $t\in[0,1)$, we can think of a point in $\Omega_{F,s}$ as being a tiling of $\mathbb{R}$ with unit tiles coloured by the symbols of $\mathcal{A}$. A point $(\mathcal{T},t)\in\Omega_{F,s}$ denotes the tiling with a unit $a_0$-tile at the origin, and where $t$ describes the point in the $a_0$-tile over which the origin lies. There is then a unit $a_{-1}$-tile and a unit $a_1$-tile to the left and right respectively of the $a_0$-tile, and so on.
		\begin{rem}
		We note that the G\"{a}hler-Maloney approach \cite{GahlerMaloney} retains geometric data associated to tilings in the form of tile lengths and expanding factors of substitutions -- this is especially important if one wants to generalise to tilings in arbitrary dimensions, or want to study the natural dynamical and ergodic properties of the tiling space. However, this means that it is more difficult to handle systems with substitutions whose expanding factors do not exist, as in the case when the individual substitutions are not primitive, or whose length vectors (given by the left Perron-Frobenius eigenvector) do not coincide. By only considering the combinatorial data in dimension-one, we are able to handle these cases rather easily and, as will be seen, only require primitivity of the mixed substitution system, and not the individual substitutions -- many of our examples fall into these cases, including the example used to prove Theorem \ref{MainTheorem} and the well-known pair of substitutions which generate the Sturmian sequences. We will look at a generalised example of the Sturmian substitutions in Example \ref{Ex:ArnouxRauzy}.	
		\end{rem}

		\subsection{Properties of the Tiling Space}
		\begin{prop}
		If $(F,s)$ is weakly primitive, then $\Omega_{F,s}$ is non-empty.
		\end{prop}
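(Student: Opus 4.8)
The plan is to reduce the statement to the nonemptiness of the discrete hull $\Sigma_{F,s}$. Since $\Omega_{F,s}=(\Sigma_{F,s}\times[0,1])/{\sim}$ is a quotient of $\Sigma_{F,s}\times[0,1]$, it is nonempty as soon as $\Sigma_{F,s}$ is, so it suffices to produce a single bi-infinite sequence admitted by $(F,s)$. Call a finite word $w\in\mathcal{A}^*$ \emph{legal} if it occurs as a subword of $\phi_{s[0,k]}(a)$ for some $k\geq 0$ and some $a\in\mathcal{A}$; note that any subword of a legal word is again legal. Unwinding the definition of admittance, a bi-infinite sequence $\mathcal{T}$ is admitted precisely when every finite subword of $\mathcal{T}$ is legal, so the task becomes building a bi-infinite sequence all of whose finite subwords are legal.

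First I would show that legal words of arbitrary length exist. Using weak primitivity repeatedly, I choose $0=n_0<n_1<n_2<\cdots$ so that each block matrix $M_{s[n_{j-1},\,n_j-1]}$ has strictly positive (hence $\geq 1$) integer entries. Since the $a$-column sum of $M_{s[0,N]}$ equals the length $|\phi_{s[0,N]}(a)|$, and a product of $j$ nonnegative integer matrices each having all entries $\geq 1$ has all entries $\geq l^{j-1}$ (where $l=|\mathcal{A}|$), the lengths $|\phi_{s[0,\,n_j-1]}(a)|$ grow without bound as $j\to\infty$. Passing to central subwords then yields a legal word of every length; in particular a legal word $v_m$ of length $2m+1$ for each $m\geq 0$.

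Next I would run a compactness argument in $\mathcal{A}^{\mathbb{Z}}$, which is compact in the product topology because $\mathcal{A}$ is finite. For each $m$, define $\mathcal{T}^{(m)}\in\mathcal{A}^{\mathbb{Z}}$ by placing $v_m$ at the coordinates $-m,\ldots,m$ and filling the remaining coordinates with any fixed symbol. Passing to a convergent subsequence $\mathcal{T}^{(m_i)}\to\mathcal{T}$, I claim $\mathcal{T}$ is admitted: given any window $a_t\cdots a_{t+u}$ of $\mathcal{T}$, convergence guarantees that for large $i$ the sequence $\mathcal{T}^{(m_i)}$ agrees with $\mathcal{T}$ on $[t,t+u]$, and for $i$ large enough this window lies inside $[-m_i,m_i]$, where $\mathcal{T}^{(m_i)}$ reproduces a subword of the legal word $v_{m_i}$. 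Hence every window of $\mathcal{T}$ is legal, so $\mathcal{T}\in\Sigma_{F,s}$ and therefore $\Omega_{F,s}\neq\emptyset$.

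The routine parts are the compactness extraction and the verification that the limit is admitted. I expect the only place where the hypothesis genuinely enters — and hence the main point to get right — is the length estimate: weak primitivity is a purely \emph{local} positivity condition and says nothing directly about the individual $\phi_{s_i}$ growing (some may even be length-preserving permutations), so the unbounded growth of $|\phi_{s[0,N]}(a)|$ must be extracted from the product structure of the positive block matrices as above. Placing the legal words symmetrically about the origin (rather than, say, starting at coordinate $0$) is also essential, to ensure the limit sequence is genuinely bi-infinite and not merely one-sided.
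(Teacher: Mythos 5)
Your argument is correct, but it reaches the conclusion by a genuinely different route in its second half. Both you and the paper extract the same thing from weak primitivity: a decomposition $0=n_0<n_1<\cdots$ into blocks whose matrices $M_{s[n_{j-1},n_j-1]}$ are strictly positive, and hence unbounded growth of the words $\phi_{s[0,n_j-1]}(a)$. From there the paper is more constructive: positivity of a \emph{triple} block product forces some letter $a$ to occur at least three times, hence at least once in the \emph{interior}, of $\phi_{s[n_0,n_3-1]}(a)$; anchoring an origin at that interior occurrence yields a nested sequence of words $\phi_{s[0,n_{3i}-1]}(a)$ each containing the previous one and growing on \emph{both} sides, whose union is the desired bi-infinite sequence. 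You instead take arbitrary legal words $v_m$ of length $2m+1$, centre them at the origin, and invoke compactness of $\mathcal{A}^{\mathbb{Z}}$; this sidesteps the interior-occurrence trick entirely (centring replaces it as the mechanism guaranteeing two-sided growth) at the cost of being non-constructive. What the paper's construction buys is a distinguished point of $\Sigma_{F,s}$ arising as a limit of the words $\phi_{s[0,i]}(a)$, which is reused immediately afterwards in the minimality proof; your soft argument proves nonemptiness just as well but produces no such canonical element. One small point you share with the paper: your length estimate $l^{j-1}$ gives growth only when $l=|\mathcal{A}|\geq 2$ (and indeed the statement fails for the identity substitution on one letter), so the standing assumption $|\mathcal{A}|>1$, which the paper makes explicit at the outset, should be stated rather than left implicit.
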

		\begin{proof}
		Without loss of generality, assume $|\mathcal{A}| > 1$. As $(F,s)$ is weakly primitive, suppose $k(n)$ is such that $M_{s[n,n+k(n)]}$ has strictly positive entries for $n\geq 0$. Let $n_0=n$, and $n_{i+1}=n_i+k(n_i)+1.$ This means that the matrix $$M_{s[n_0,n_3-1]}=M_{s[n_0,n_0+k(n_0)]}M_{s[n_1,n_1+k(n_1)]}M_{s[n_2,n_2+k(n_2)]}$$ will have all entries at least as large as $3$. From this we see that for some letter $a\in\mathcal{A}$ there must be a copy of the letter $a$ appearing in the interior\footnote{The interior of the word $a_1a_2\ldots a_{i-1}a_i$ is the word $a_2\ldots a_{i-1}$.} of the word $\phi_{s[n_0,n_3-1]}(a)$, since the leftmost and rightmost letters account for at most two $a$s. Let $n=0$. By considering the sequence of words $$\phi_{s[0,n_3-1]}(a), \phi_{s[0,n_6-1]}(a), \ldots, \phi_{s[0,n_{3i}-1]}(a),\ldots$$ we see that the $i$th word appears as a subword of the $(i+1)$st word. As $a$ appears in the interior of $\phi_{s[0,n_3-1]}(a)$, this sequence of words expands in both directions, where here we assign an `origin' to the interior element $a$. In the limit of this (not necessarily unique) sequence of increasing containments of words, we are left with a bi-infinite sequence which by construction is admitted by $(F,s)$.	It follows that $\Sigma_{F,s}$ is non-empty, and then so is $\Omega_{F,s}$.
		\end{proof}
		The next result is a consequence of the right to left implication of Theorem \ref{durand2} but we provide an elementary proof for completeness.
		\begin{prop}
		If $(F,s)$ is weakly primitive, then the translation action on the tiling space $\Omega_{F,s}$ is minimal.
		\end{prop}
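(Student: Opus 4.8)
The plan is to reduce the statement to a purely combinatorial property of the language $\mathcal{L}_{F,s}$ and then to deduce that property from weak primitivity. First I would record the standard fact that, since $\Omega_{F,s}$ is the suspension of the shift $\sigma|_{\Sigma_{F,s}}$ with constant roof function $1$, the $\mathbb{R}$-orbit of a point $(\mathcal{T},t)$ is dense in $\Omega_{F,s}$ if and only if the $\sigma$-orbit of $\mathcal{T}$ is dense in $\Sigma_{F,s}$. Hence the translation action on $\Omega_{F,s}$ is minimal if and only if the shift action on $\Sigma_{F,s}$ is minimal, and a subshift with language $\mathcal{L}_{F,s}$ is minimal precisely when this language is \emph{uniformly recurrent}: for every $w\in\mathcal{L}_{F,s}$ there is an $R>0$ such that every $v\in\mathcal{L}_{F,s}$ with $|v|\geq R$ contains $w$ as a subword. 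So it suffices to prove uniform recurrence.

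Fix $w\in\mathcal{L}_{F,s}$. By definition $w$ occurs in some admitted bi-infinite sequence, and every finite window of an admitted sequence occurs in $\phi_{s[0,k]}(a)$ for some $k\geq 0$ and some $a\in\mathcal{A}$; fix such a $k$ and $a$. I would then apply weak primitivity at the index $k+1$ to obtain $m$ with $M_{s[k+1,k+m]}$ strictly positive. Since each $\phi_{s_j}$ sends letters to nonempty words, every substitution matrix has no zero column, so the product $M_{s[k+1,N]}$ remains strictly positive for every $N\geq k+m$; strict positivity says exactly that $a$ occurs in $\phi_{s[k+1,N]}(b)$ for every letter $b$. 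Consequently $\phi_{s[0,k]}(a)$ — and hence $w$ — occurs in
$$\phi_{s[0,N]}(b)=\phi_{s[0,k]}\bigl(\phi_{s[k+1,N]}(b)\bigr)$$
for every $b\in\mathcal{A}$ and every $N\geq k+m$. I set $N_0=k+m$ and $L=\max_{b\in\mathcal{A}}|\phi_{s[0,N_0]}(b)|$, and claim $R=2L$ works.

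To finish, take any $v\in\mathcal{L}_{F,s}$ with $|v|\geq 2L$. As before, $v$ occurs in $\phi_{s[0,K]}(c)$ for some $K$ and $c$, and by raising the level exactly as in the previous step (using weak primitivity and the no-zero-column property) I may assume $K\geq N_0$; since $|\phi_{s[0,N_0]}(c)|\leq L<2L$ this forces $K\geq N_0+1$. Decomposing at level $N_0$ gives
$$\phi_{s[0,K]}(c)=\phi_{s[0,N_0]}(u_1)\,\phi_{s[0,N_0]}(u_2)\cdots\phi_{s[0,N_0]}(u_p),$$
where $u_1\cdots u_p=\phi_{s[N_0+1,K]}(c)$, a concatenation of blocks each of length at most $L$. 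Because $|v|\geq 2L$, the window occupied by $v$ inside this concatenation must completely contain at least one full block $\phi_{s[0,N_0]}(u_i)$, and by the previous paragraph that block contains $w$, hence so does $v$. This establishes uniform recurrence and therefore minimality.

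The main obstacle is the passage from ``$w$ occurs in some $\phi_{s[0,k]}(a)$'' to ``$w$ occurs in $\phi_{s[0,N]}(b)$ for \emph{every} seed $b$ and all large $N$'', together with the matching ability to push an arbitrary long admitted word $v$ up to the common level $N_0$. Both rely on stabilising strict positivity of the partial products $M_{s[k+1,N]}$, for which the only nontrivial input is the observation that substitution matrices have no zero columns, keeping the products strictly positive once they become so. Granting that, the block-counting estimate $|v|\geq 2L$ is elementary.
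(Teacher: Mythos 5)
Your proof is correct and follows essentially the same route as the paper's: both hinge on showing that a fixed admitted word $w$ occurs in $\phi_{s[0,N]}(b)$ for every letter $b$ once the relevant matrix product is strictly positive, and then on the block-decomposition estimate that any window of length $2L$ must contain a full level-$N$ supertile. Your version is if anything slightly tighter, since phrasing the conclusion as uniform recurrence of $\mathcal{L}_{F,s}$ yields minimality directly, whereas the paper argues via density of the orbit of one particular limit sequence $\mathcal{T}$.
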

		\begin{proof}
		Let $w$ be a word which is admitted by $(F,s)$ and let $n\geq 0$ and $a\in\mathcal{A}$ be such that $w$ appears as a substring of the word $\phi_{s[0,n]}(a)$. Let $\mathcal{T} \in \Sigma_{F,s}$ be a limit of applying the sequence of substitutions $(\phi_{s[0,i]})_{i\geq 0}$ to the letter $a \in \mathcal{A}$. We will show that there are bounded gaps between subsequent occurrences of the word $w$ in the bi-infinite sequence $\mathcal{T}$.
		
		As the letter $a$ appears in $\phi_{s[0,k(0)]}(b)$ for all $b \in \mathcal{A}$, the word $w$ must then appear as a subword of the words $\phi_{s[0,n+k(0)]}(b)$ for all $b \in \mathcal{A}$. Let $L$ be the maximum of the lengths of the words $\phi_{s[0,n+k(0)]}(b)$ over all $b\in\mathcal{A}$. The bi-infinite sequence $\mathcal{T} \in \Sigma_{F,s}$ can be decomposed into a concatenation of these words $\phi_{s[0,n+k(0)]}(b)$ and so it follows that the word $w$ appears in $\mathcal{T}$ with gap at most $2L$.
		
		It follows that for all bi-infinite sequences $\mathcal{T}' \in \Sigma_{F,s}$, and for all $\epsilon>0$, there exists a $k\geq 0$ such that $d(\sigma^k(\mathcal{T}), \mathcal{T}')< \epsilon$ and so $\mathcal{T}'$ belongs to the closure of the shift orbit of $\mathcal{T}$. So $(\Sigma_{F,s},\sigma)$ is a minimal dynamical system. This readily implies that the translation action inherited by $\Omega_{F,s}$ is minimal.
		\end{proof}
		In particular, for a weakly primitive system $(F,s)$ and for any bi-infinite sequence $\mathcal{T}\in\Sigma_{F,s}$, we have $\Sigma_{F,s}=\overline{\{\sigma^k(\mathcal{T})\mid k\in\mathbb{Z}\}}$ and $\Omega_{F,s}=\overline{\{(\sigma^{\lfloor t\rfloor}(\mathcal{T}),t-\lfloor t\rfloor)\mid t\in\mathbb{R}\}}$. Here, $\lfloor - \rfloor$ is the floor function.
		\begin{definition}
		Let $\mathcal{T} = \ldots a_{-2} a_{-1} \cdot a_0 a_1 a_2 \ldots$ be a bi-infinite sequence in $\Sigma_{F,\sigma^{i+1}(s)}$ and let $t\in[0,1)$, so that $(\mathcal{T},t)$ is an element of the tiling space $\Omega_{F,\sigma^{i+1}(s)}$. We define a map between tiling spaces which we call $\phi_{s_i}\colon \Omega_{F,\sigma^{i+1}(s)} \to \Omega_{F,\sigma^i(s)}$, given by $$\phi_{s_i}(\mathcal{T},t)=(\sigma^{\lfloor \tilde{t} \rfloor}(\phi_{s_i}(\mathcal{T})), \tilde{t}-\lfloor \tilde{t} \rfloor)$$ where $\tilde{t}=|\phi_{s_i}(a_0)|\cdot t$, and where $|\phi_{s_i}(a)|$ is the length of the substituted word $\phi_{s_i}(a)$. 
		\end{definition}
		This map is continuous. Intuitively, we take a unit tiling in $\Omega_{F,\sigma^{i+1}(s)}$ with a prescribed origin, partition each tile of type $a$ uniformly with respect to the substituted word $\phi_{s_i}(a)$ into tiles of length $\frac{1}{|\phi_{s_i}(a)|}$. Then expand each tile away from the origin so that each new tile is again of unit length, and with the origin lying proportionally above the tile it appears in after partitioning the original tiling.
		\begin{definition}
		A mixed substitution tiling system $(F,s)$ is said to be \emph{recognisable} if for every $i\geq 0$ the map $\phi_{s_i}\colon \Omega_{F,\sigma^{i+1}(s)} \to \Omega_{F,\sigma^i(s)}$ is injective.
		\end{definition}
		\begin{definition}
		A mixed substitution tiling system $(F,s)$ has the \emph{unique composition property} if for any $i\geq 0$ and $\mathcal{T} \in \Sigma_{F,\sigma^i(s)}$, there is a unique $\mathcal{T}' \in \Sigma_{F,\sigma^{i+1}(s)}$ such that $\phi_{s_i}(\mathcal{T}')=\mathcal{T}$. Equivalently, there is a unique way of partitioning the symbols in $\mathcal{T} = \ldots a_{-1} a_0 a_1 \ldots$ into words which are substituted letters $\ldots \phi_{s_i}(a'_{-1})\phi_{s_i}(a'_0)\phi_{s_i}(a'_1)\ldots=\mathcal{T}$.
		\end{definition}
		It is an exercise to show that $\phi_{s_i}$ is always surjective, and so one notes that recognisability of a mixed substitution system is equivalent to it having the unique composition property.
				

		\section{Inverse Limits and Cohomology}\label{Section:BargeDiamond}
		\subsection{The Barge-Diamond Complex for Mixed Substitutions}\label{BargeDiamond}
		In \cite{BargeDiamond} Barge and Diamond introduced a cell complex for $1$-dimensional substitution tilings which we refer to as the BD complex. This was later extended by Barge, Diamond, Hunton and Sadun in \cite{BargeDiamondHuntonSadun} to arbitrary dimensions and allowed for symmetry groups beyond translations. Intuitively, we think of their complex as being the Anderson-Putnam complex of a collared version of the tiling, but where we collar points instead of tiles. These collared points then retain transition information between tiles and so induce \emph{border forcing} -- a term originally coined by Kellendonk \cite{Kellendonk} and utilised by Anderson and Putnam \cite{AndersonPutnam} in their seminal paper. Under a suitable choice of map on the BD complex induced by the substitution, they produced an inverse system whose limit is homeomorphic to the relevant tiling space. The complex has the advantage of being more manageable than the Anderson-Putnam collared complex for the computation of cohomology groups, as well as giving conceptually insightful information about where the generators of cohomology are coming from with regard to the tiling.
		
		The appearance of an exact sequence coming from considering the relative cohomology groups of their complex, and a certain subcomplex of `vertex edges', allows for the cohomology of a substitution tiling space to be built from relatively easy to compute pieces -- most notably for us, one of these pieces is the direct limit of the transpose of the original substitution matrix. 
		
		In order to apply their technique to our setting, we need to extend their method to more general sequences of substitutions, much like G\"ahler and Maloney did in \cite{GahlerMaloney} for the Anderson-Putnam complex in their treatment of mixed substitution systems.
		\begin{definition}
		Let $\mathcal{A}$ be a finite alphabet and let $(F,s)$ be a primitive substitution system over $\mathcal{A}$. Let $\displaystyle{\epsilon=\min_{ a\in\mathcal{A}, \phi\in F} \left\{\frac{1}{2|\phi(a)|}\right\}}$ be a small positive real number. For $a\in\mathcal{A}$, let $$e_a=[\epsilon, 1-\epsilon]\times\{a\}$$ and for $ab\in\mathcal{L}^2_{F,s}$, let $$e_{ab}=[-\epsilon,\epsilon]\times\{ab\}.$$ The \emph{Barge-Diamond complex} for the mixed substitution system $(F,s)$ is denoted by $K_{F,s}$ and is defined to be
		$$K_{F,s} = \left( \bigcup_{a\in\mathcal{A}} e_a \cup \bigcup_{ab\in\mathcal{L}^2_{F,s}}e_{ab} \right)/{\sim}$$
		where for all $a,b,c\in\mathcal{A}$,
		$$(1-\epsilon,a) \sim(-\epsilon,ab) \sim (-\epsilon,ac) \quad \mbox{and} \quad (\epsilon,a) \sim (\epsilon,ba) \sim (\epsilon, ca).$$
		We also define the \emph{subcomplex of vertex edges} $S_{F,s}$ of $K_{F,s}$ by
		$$S_{F,s}=\bigcup_{ab\in\mathcal{L}^2_{F,s}}e_{ab}/{\sim}.$$
		The other edges $e_a$ in $K_{F,s}$ are called \emph{tile edges}.
		\end{definition}		
		\begin{rem}
		This is only a slight modification of the usual Barge-Diamond complex for a unit-length substitution tiling, mainly in the choice of $\epsilon$ and lengths of edges -- we necessarily lose geometric information about tile lengths because we do not necessarily have a compatible Perron-Frobenius eigenvalue, as our transition matrices may not necessarily be individually primitive or have coinciding spectra.
		\end{rem}
		From the construction, a vertex edge $e_{ab}$ is only included in $K_{F,s}$ if the two-letter word $ab$ is admitted by $(F,s)$. This means that the BD complex for a system $(F,s)$ will be dependent on $s$. In particular, the sequence of complexes $(K_{F,\sigma^i(s)})_{i\geq 0}$ may not be constant, as would be the case in the classical setting of a single substitution where $F=\{\phi\}$.
		
		We also remark that the subcomplex $S_{F,\sigma^i(s)}$ need not be connected. Such an example was given by Barge and Diamond in \cite{BargeDiamond} where $\mathcal{A}=\{a,b,c,d\}$ is an alphabet on four letters and $F=\{\phi\}$ is a single substitution given by
		$$
		\phi:\left\{
		\begin{array}{cc}
			\begin{array}{rcl}
			a & \mapsto & abcda \\
			b & \mapsto & ab
			\end{array}
			&
			\begin{array}{rcl}
			c & \mapsto & cdbc \\
			d & \mapsto & db
			\end{array}
		\end{array}
			\right.
		$$
		which admits pairs $\mathcal{L}^2_{\phi}=\{aa, ab, ba, bc, da, db\} \cup \{cd\}$ where the vertex edge $e_{cd}$ is disjoint from the rest of the vertex edges appearing in $S_{\phi}$.
		\begin{example}
		As an example for where the BD complexes for $(F,\sigma^i(s))$ and $(F,\sigma^j(s))$ can differ, consider the set of substitutions $F=\{\phi_0, \phi_1\}$ on the alphabet $\mathcal{A}=\{a,b\}$ given by
		$$
		\begin{array}{cc}
			\phi_0:\left\{
			\begin{array}{rcl}
			a & \mapsto & b \\
			b & \mapsto & ba
			\end{array}
			\right., 
			&
			
			\phi_1:\left\{
			\begin{array}{rcl}
			a & \mapsto & ab \\
			b & \mapsto & ba 
			\end{array}
			\right.
		\end{array}.
		$$
		Here, $\phi_0$ is the Fibonacci substitution and $\phi_1$ is the Thue-Morse substitution. Let $s$ be the sequence $s=(1,0,0,0\ldots)$, so $s_0=1$ and $s_i=0$ for all $i\geq 1$.
		
		It can be easily checked that $\mathcal{L}^2_{F,s}=\{aa,ab,ba,bb\}$ and $\mathcal{L}^2_{F,\sigma(s)}=\{ab,ba,bb\}$. So $K_{F,s}$ and $K_{F,\sigma(s)}$ have a different number of vertex edges. After that, $K_{F\sigma^i(s)}$ will be equal to $K_{F,\sigma(s)}$ for all positive $i$. See Figure \ref{fig:M1} for the associated BD complexes. We have labelled the tile edges $e_x$ and the vertex edges $e_{xy}$ by their indices for better readability.
		\end{example}
		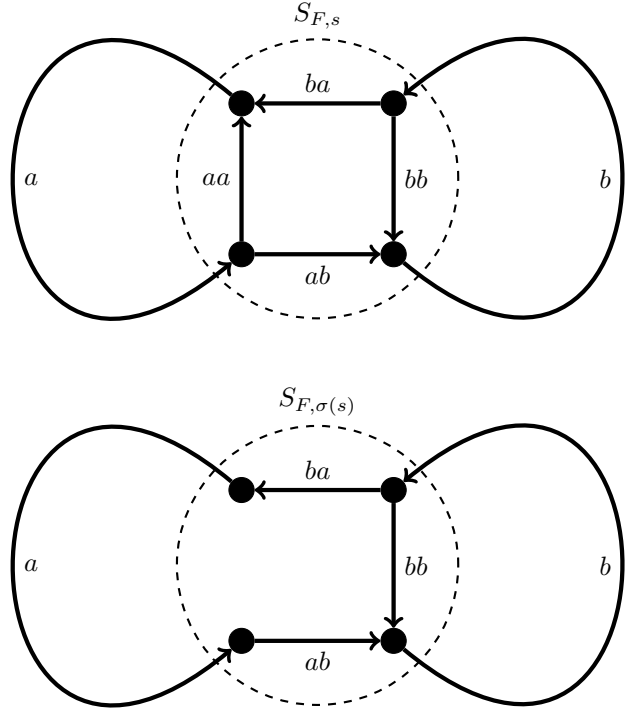
\begin{figure}[h]
		$$
		\begin{array}{c}\begin{tikzpicture}[node distance=2cm, auto]
		\clip (-5,-2.5) rectangle (5,2.5) ;
		\node [draw, circle, minimum size=3.7cm, thick, dashed] (xx) [label=$S_{F,s}$]{};
  		\node [draw, fill, circle, minimum size=.15cm] (00) [below left of=xx, node distance=1.4142cm] {};
  		\node [draw, fill, circle, minimum size=.15cm] (10) [above of=00] {};
  		\node [draw, fill, circle, minimum size=.15cm] (01) [right of=00] {};
  		\node [draw, fill, circle, minimum size=.15cm, use as bounding box] (11) [right of=10] {};
  		
  		
  		\draw [->,ultra thick] (00) to node {$aa$} (10);
  		\draw [->,ultra thick] (11) to node {$bb$} (01);
  		\draw [->,ultra thick] (00) to node [swap] {$ab$} (01);
  		\draw [->,ultra thick] (11) to node [swap] {$ba$} (10);

  		\path (10) edge[ out=140, in=220
                , looseness=0.1, loop
                , distance=5cm, ->
                , ultra thick]
            node {$a$} (00);
            
        \path (01) edge[ out=320, in=40
                , looseness=0.1, loop
                , distance=5cm, ->
                , ultra thick]
            node {$b$} (11);
		\end{tikzpicture}
\\		
		\begin{tikzpicture}[node distance=2cm, auto]
		\clip (-5,-2.5) rectangle (5,2.5) ;
		\node [draw, circle, minimum size=3.7cm, thick, dashed] (xx) [label=$S_{F,\sigma(s)}$]{};
  		\node [draw, fill, circle, minimum size=.15cm] (00) [below left of=xx, node distance=1.4142cm] {};
  		\node [draw, fill, circle, minimum size=.15cm] (10) [above of=00] {};
  		\node [draw, fill, circle, minimum size=.15cm] (01) [right of=00] {};
  		\node [draw, fill, circle, minimum size=.15cm, use as bounding box] (11) [right of=10] {};
  		
  		
  		\draw [->,ultra thick] (11) to node {$bb$} (01);
  		\draw [->,ultra thick] (00) to node [swap] {$ab$} (01);
  		\draw [->,ultra thick] (11) to node [swap] {$ba$} (10);

  		\path (10) edge[ out=140, in=220
                , looseness=0.1, loop
                , distance=5cm, ->
                , ultra thick]
            node {$a$} (00);
            
        \path (01) edge[ out=320, in=40
                , looseness=0.1, loop
                , distance=5cm, ->
                , ultra thick]
            node {$b$} (11);
		\end{tikzpicture}
		\end{array}
		$$
		\caption{The Barge-Diamond complexes $K_{F,s}$ and $K_{F,\sigma(s)}$ with their subcomplexes of vertex cells $S_{F,s}$ and $S_{F,\sigma(s)}$ circled accordingly.}\label{fig:M1}
		\end{figure}
		\begin{definition}Let $(F,s)$ be a mixed substitution system and let $(\mathcal{T},t)$ be a tiling in the tiling space $\Omega_{F,\sigma^i(s)}$ with $t\in [0,1)$. Suppose $\mathcal{T}=\ldots a_{-1}\cdot a_0 a_1\ldots$. We define a surjective continuous map $p_i\colon\Omega_{F,\sigma^i(s)}\to K_{F,\sigma^i(s)}$ by
		$$p_i(\mathcal{T},t)=
		\begin{cases}
		(t,a_{-1}a_0) & \mbox{if } t\in[0,\epsilon]\\
		(t,a_0) & \mbox{if } t\in[\epsilon, 1-\epsilon]\\
		(t-1, a_0a_1) & \mbox{if } t\in[1-\epsilon,1)
		\end{cases}$$
		\end{definition}
		There is a unique continuous map $f_i\colon K_{F,\sigma^{i+1}(s)} \to K_{F,\sigma^i(s)}$ such that $f_i\circ p_{i+1}=p_i\circ\phi_{s_i}$ induced in the obvious way by the substitution on edges.
		\begin{rem}The maps $f_i$ are not cellular with respect to the tile and vertex edges of the BD complexes because the edges are, in general, expanded as they are substituted; for instance vertex edges $e_{ab}$ are not just mapped onto some other vertex edge $e_{cd}$, but also overlap into the adjacent tile edges $e_c$ and $e_d$. We account for this later by introducing a cellular map $g_i$ which is homotopic to $f_i$.
		\end{rem}
		To reduce notation, we assume primitivity and recognisability always hold from this point. Also for notational convenience, let $K_i = K_{F,\sigma^i(s)}$, $S_i = S_{F,\sigma^i(s)}$, $\Omega_i = \Omega_{F,\sigma^i(s)}$, and set $\Omega = \Omega_0 = \Omega_{F,s}$.
		\begin{thm}\label{homeomorphism}
		For a primitive, recognisable mixed substitution system $(F,s)$, there is a homeomorphism $$\Omega \cong \lim_{\longleftarrow}(K_i,f_i)$$ between the mixed substitution tiling space and the inverse limit of the associated inverse system of induced substitution maps on the Barge-Diamond complexes.
		\end{thm}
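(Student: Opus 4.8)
The plan is to realise $\Omega$ as the inverse limit of the top row of the commutative ladder built from the maps $p_i$, and then to show that the map induced on inverse limits is a homeomorphism by verifying that it is a continuous bijection between compact Hausdorff spaces. First I would record that, under the standing assumption of recognisability, each substitution map $\phi_{s_i}\colon\Omega_{i+1}\to\Omega_i$ is injective; since it is always surjective and each $\Omega_i$ is compact Hausdorff (a suspension of the closed, hence compact, shift space $\Sigma_{F,\sigma^i(s)}\subseteq\mathcal{A}^{\mathbb{Z}}$), it is a homeomorphism. Writing $\Phi_i=\phi_{s_0}\circ\cdots\circ\phi_{s_{i-1}}\colon\Omega_i\to\Omega$ (with $\Phi_0=\mathrm{id}$), each $\Phi_i$ is then a homeomorphism. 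The defining relation $f_i\circ p_{i+1}=p_i\circ\phi_{s_i}$ says that the $p_i$ assemble into a morphism of inverse systems, so they induce a continuous map
$$P\colon\Omega\to\lim_{\longleftarrow}(K_i,f_i),\qquad P(x)=\big(p_i(\Phi_i^{-1}(x))\big)_{i\geq 0}.$$
A short check using $\Phi_{i+1}=\Phi_i\circ\phi_{s_i}$ shows $f_i(p_{i+1}(\Phi_{i+1}^{-1}(x)))=p_i(\phi_{s_i}(\Phi_{i+1}^{-1}(x)))=p_i(\Phi_i^{-1}(x))$, so $P$ does land in the inverse limit. As the inverse limit of compact Hausdorff spaces is again compact Hausdorff, and a continuous bijection between such spaces is automatically a homeomorphism, it suffices to prove that $P$ is a bijection.

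Surjectivity I would obtain by a compactness argument. Given $(x_i)\in\lim_{\longleftarrow}(K_i,f_i)$, set $C_i=\Phi_i(p_i^{-1}(x_i))\subseteq\Omega$. Each $C_i$ is nonempty because $p_i$ is surjective, and closed because it is the homeomorphic image of a closed set. The compatibility $f_i(x_{i+1})=x_i$ together with $p_i\circ\phi_{s_i}=f_i\circ p_{i+1}$ shows that $\phi_{s_i}$ carries $p_{i+1}^{-1}(x_{i+1})$ into $p_i^{-1}(x_i)$, and since $\Phi_i\circ\phi_{s_i}=\Phi_{i+1}$ this yields $C_{i+1}\subseteq C_i$. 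Thus $(C_i)_{i\geq 0}$ is a nested sequence of nonempty closed subsets of the compact space $\Omega$, so $\bigcap_i C_i\neq\varnothing$; any point of the intersection maps under $P$ to $(x_i)$.

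The main obstacle is injectivity, and this is precisely where recognisability and primitivity do the work. Suppose $P(x)=P(x')$, so that $p_i(\Phi_i^{-1}(x))=p_i(\Phi_i^{-1}(x'))$ for every $i$. The key observation is that the value of $p_i$ on the $i$-fold desubstituted tiling $\Phi_i^{-1}(x)$ determines the tile letter (or, near a boundary, the transition) at the origin of $\Phi_i^{-1}(x)$ together with the exact offset of the origin; since the substitution is deterministic, the origin tile letter $b$ determines the entire word $\phi_{s[0,i-1]}(b)$, which is the level-$i$ supertile of $x$ containing the origin. Hence equality of all the $p_i$-data forces $x$ and $x'$ to carry identical supertiles about the origin at every level and to share the same offset. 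By primitivity the lengths $|\phi_{s[0,i-1]}(b)|$ grow without bound, so these agreeing patches exhaust $\mathbb{R}$ and we conclude $x=x'$.

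The delicate point I expect to have to argue carefully is the case in which the origin lies arbitrarily close to supertile boundaries at every level, so that the growing supertile need not extend in both directions. This is exactly the situation the \emph{vertex edges} $e_{ab}$ of the Barge--Diamond complex are designed to handle: when the offset falls in $[0,\epsilon]$ or $[1-\epsilon,1)$, the map $p_i$ records the transition between the two adjacent (super)tiles rather than a single tile, so both sides of the boundary are recovered. Recognisability (the unique composition property) guarantees that the resulting hierarchy of local desubstitution data is consistent and reassembles a unique tiling, which is what closes the injectivity argument and hence the proof.
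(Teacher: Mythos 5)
Your proposal is correct and follows essentially the same route as the paper: build the induced map on inverse limits from the commuting ladder $f_i\circ p_{i+1}=p_i\circ\phi_{s_i}$ (using recognisability plus compactness to invert the $\phi_{s_i}$), get surjectivity from surjectivity of the $p_i$, get injectivity by showing the thread of Barge--Diamond data determines patches about the origin that grow without bound by primitivity (with the vertex edges supplying two-sided growth near supertile boundaries), and finish by compactness. The only detail the paper adds that you elide is the treatment of the identified vertex points $V=\{(\epsilon,a),(1-\epsilon,a)\}$, where the transition letter is not recorded; the paper resolves this by observing that $\epsilon$ is small enough that such a point maps one level down into the interior of a tile edge.
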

		The proof is essentially identical to the one given by Barge and Diamond in the case of a single substitution.
		\begin{proof}
		From the definition of the maps $f_i$, we have commuting diagrams
		$$\begin{tikzpicture}[node distance=2.5cm, auto]
  		\node (00) {$K_i$};
  		\node (10) [node distance=2cm, above of=00] {$\Omega_i$};
  		\node (01) [right of=00] {$K_{i+1}$};
  		\node (11) [right of=10] {$\Omega_{i+1}$};

  		\draw[->] (10) to node [swap] {$p_i$} (00);
  		\draw[->] (11) to node [swap] {$p_{i+1}$} (01);
  		\draw[->] (01) to node [swap] {$f_i$} (00);
  		\draw[->] (11) to node [swap] {$\phi_{s_i}$} (10);
		\end{tikzpicture}$$
		for each $i\geq 0$. We note that that the maps $\phi_{s_i}$ are homeomorphisms by recognisability and compactness. These commutative diagrams induce a map $p\colon \Omega \to \displaystyle{\lim_{\longleftarrow}(K_i,f_i)}$ given by
		$$p(x) = (p_0(x), p_1(\phi_{s_0}^{-1}(x)), p_2(\phi_{s_1}^{-1}(\phi_{s_0}^{-1}(x))),\ldots, p_{i+1}(\phi_{s[0,i]}^{-1}(x)),\ldots).$$
		As each $p_i$ is surjective, so then is $p$.
		
		Let $V=\{(\epsilon,a)\}_{a\in\mathcal{A}} \cup \{(1-\epsilon,a)\}_{a\in\mathcal{A}}$ and choose a point $y=(y_0, y_1, \ldots) \in \displaystyle{\lim_{\longleftarrow} f_i }$. If there exists an $n\geq 0$ such that $y_n\in V$, then since $\epsilon$ was chosen small enough, $y_{n-1}$ will be in $e_a\setminus V$ for some symbol $a\in\mathcal{A}$. So, if $(\mathcal{T},t)$ is in $p^{-1}(y)$, then the $0$th tile of $\phi_{s[0,n-1]}^{-1}(\mathcal{T},t)$ is determined, and also the placement of the origin in the interior of this tile. So a patch of length $|\phi_{s[0,n]}(a)|$ around the origin in $(\mathcal{T},t)$ is also determined, and by primitivity if $y_n\in V$ for arbitrarily large $n$, then the length of this determined patch increases without bound. It follows that the entire tiling $(\mathcal{T},t)$ is determined.
		
		If $y_n$ is not in $V$ for arbitrarily large $n$ then there is some $N\geq 0$ such that for all $n\geq N$, either $y_n\in e_{ab}$ for a pair $a,b\in\mathcal{A}$ or $y_n\in e_a\setminus V$ for some $a\in\mathcal{A}$. In the first case, the $0$th and $(-1)$st tiles of $\phi_{s[0,n-1]}^{-1}(\mathcal{T},t)$ are determined, and the position of the origin within one of these tiles. In the second case, the $0$th tile is determined and the position of the origin in this tile. Following the above argument this allows us to conclude that arbitrarily large patches around the origin are determined and these patches eventually cover the entire real line, hence $(\mathcal{T},t)$ is fully determined. So, the map $p$ is injective and so also bijective. By usual compactness arguments, we conclude that $p$ is a homeomorphism.
		\end{proof}
		 Let $l(w)$ and $r(w)$ respectively be the leftmost and rightmost letters of the word $w$.
		\begin{definition}
		We define a cellular map $g_i\colon K_{i+1} \to K_i$ between consecutive BD complexes on tile edges by, if $\phi_{s_i}(a)=a_1a_2\ldots a_k$, $$g_i(e_a):=e_{a_1} \cup e_{a_1a_2} \cup e_{a_2} \cup \cdots \cup e_{a_{k-1}a_k} \cup e_{a_k}$$ in an orientation preserving, and uniformly expanding way, and on vertex edges by, if $r(\phi_{s_i}(a))=a_k$ and $l(\phi_{s_i}(b))=b_1$, $$g_i(e_{ab}):=e_{a_kb_1}.$$
		\end{definition}
		The maps $f_i$ and $g_i$ are homotopic. The maps $g_i$ also satisfy the property that $g_i(S_{i+1})\subset S_i$ and $g_i|_{S_{i+1}}$ is simplicial.
		\begin{thm}\label{isomorphism}
		There is an isomorphism of groups
		$$\check{H}^1(\Omega) \cong \lim_{\longrightarrow}(H^1(K_i),g_i^*)$$ between the first \Cech cohomology of the mixed substitution tiling space and the direct limit of induced maps $g_i^*$ acting on the first cohomology groups of the Barge-Diamond complexes.
		\end{thm}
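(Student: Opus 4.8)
The plan is to combine the inverse-limit description of $\Omega$ furnished by Theorem \ref{homeomorphism} with the continuity property of \Cech cohomology, and then to trade the bonding maps $f_i$ for the homotopic cellular maps $g_i$. The whole argument is formal once those two ingredients are in place.

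First I would invoke Theorem \ref{homeomorphism} to write $\Omega \cong \lim_{\longleftarrow}(K_i,f_i)$ as an inverse limit of the complexes $K_i$. Each $K_i$ is a finite $1$-dimensional CW complex, hence compact Hausdorff, and the inverse limit of a system of compact Hausdorff spaces is again compact Hausdorff, so the standing hypotheses needed for the continuity of \Cech cohomology are satisfied. The key step is then the continuity theorem itself: for an inverse system $(X_i,h_i)$ of compact Hausdorff spaces with limit $X$ there is a natural isomorphism $\check{H}^n(X)\cong\lim_{\longrightarrow}(\check{H}^n(X_i),h_i^*)$, where the bonding homomorphisms of the direct system are the maps $h_i^*$ induced in cohomology. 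Taking $X_i=K_i$ and $h_i=f_i$ (so that $f_i^*\colon H^1(K_i)\to H^1(K_{i+1})$) yields $\check{H}^1(\Omega)\cong\lim_{\longrightarrow}(\check{H}^1(K_i),f_i^*)$.

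Next I would identify the \Cech cohomology of each approximant with its ordinary cohomology. Since each $K_i$ is a CW complex, its \Cech cohomology coincides with its cellular, equivalently singular, cohomology, so $\check{H}^1(K_i)\cong H^1(K_i)$ and the direct system above becomes $(H^1(K_i),f_i^*)$. Finally I would appeal to the homotopy $f_i\simeq g_i$ recorded immediately after the definition of $g_i$: homotopic maps induce the same homomorphism in cohomology, so $f_i^*=g_i^*$ as maps $H^1(K_i)\to H^1(K_{i+1})$ for every $i$. Hence the two direct systems $(H^1(K_i),f_i^*)$ and $(H^1(K_i),g_i^*)$ are literally the same, and therefore so are their direct limits, giving the desired isomorphism $\check{H}^1(\Omega)\cong\lim_{\longrightarrow}(H^1(K_i),g_i^*)$.

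I expect the only genuine obstacle to be the careful invocation of the continuity of \Cech cohomology — specifically, verifying that the $K_i$ and their inverse limit meet the compact Hausdorff hypotheses and that the resulting isomorphism is natural with respect to the bonding maps, so that the induced maps $f_i^*$ really do serve as the structure maps of the direct limit. Everything after that is a routine consequence of homotopy invariance together with the agreement of \Cech and cellular cohomology on finite complexes, so I would keep the exposition of those last two steps brief.
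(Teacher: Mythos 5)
Your argument is correct and is essentially the paper's own proof: both combine Theorem \ref{homeomorphism}, the continuity of \Cech cohomology, the agreement of \Cech and singular cohomology on CW complexes, and the homotopy $f_i \simeq g_i$ to replace $f_i^*$ by $g_i^*$ in the direct limit. Your version simply spells out the compact Hausdorff hypotheses and naturality more explicitly.
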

		\begin{proof}
		Since $f_i$ and $g_i$ are homotopic, \Cech cohomology is isomorphic to singular cohomology on CW-complexes, \Cech cohomology is a continuous functor, and $\Omega$ is homeomorphic to $\displaystyle{\lim_{\longleftarrow}(K_i, f_i)}$, we get $$\lim_{\longrightarrow}(H^1(K_i), g_i^*) = \lim_{\longrightarrow}(H^1(K_i), f_i^*) \cong \check{H}^1(\lim_{\longleftarrow}((K_i), f_i)) \cong \check{H}^1(\Omega).$$
		\end{proof}
		\begin{thm}\label{LES}
		Let $|\mathcal{A}|=l$ and $\displaystyle{\Xi=\lim_{\longleftarrow}(S_i, g_i)}$. There is an exact sequence $$0 \to \tilde{H}^0(\Xi) \to \lim_{\longrightarrow}(\mathbb{Z}^l,M_{s_i}^T) \to \check{H}^1(\Omega) \to \check{H}^1(\Xi) \to 0.$$
		\end{thm}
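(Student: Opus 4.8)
The plan is to run the long exact sequence of the pair $(K_i,S_i)$ in reduced cohomology, identify each of its terms, assemble the resulting sequences into a commutative ladder using the maps of pairs $g_i$, and then pass to the direct limit.

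First I would analyse the pair $(K_i,S_i)$. Collapsing the subcomplex of vertex edges, the quotient $K_i/S_i$ is a wedge of $l$ circles, one loop for each tile edge $e_a$, $a\in\mathcal{A}$, since the two endpoints $(\epsilon,a)$ and $(1-\epsilon,a)$ of each $e_a$ lie in $S_i$ and are identified to the basepoint. As $(K_i,S_i)$ is a CW pair we have $H^1(K_i,S_i)\cong\tilde{H}^1(K_i/S_i)\cong\mathbb{Z}^l$, with basis the classes dual to the tile edges, while $H^0(K_i,S_i)\cong\tilde{H}^0(K_i/S_i)=0$ because $K_i/S_i$ is connected. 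By primitivity each $K_i$ is connected, so $\tilde{H}^0(K_i)=0$, and since $K_i$ is one-dimensional, $H^2(K_i,S_i)=0$. The reduced long exact sequence of the pair therefore collapses, for each $i\geq 0$, to the four-term exact sequence
$$0 \to \tilde{H}^0(S_i) \to H^1(K_i,S_i) \to H^1(K_i) \to H^1(S_i) \to 0.$$

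Next I would identify the map induced by $g_i$ on the relative term. Since $g_i(S_{i+1})\subset S_i$, the map $g_i$ is a map of pairs $(K_{i+1},S_{i+1})\to(K_i,S_i)$, and by naturality of the long exact sequence it induces a morphism of the four-term sequences above, yielding a commutative ladder. On relative $1$-chains, the definition $g_i(e_a)=e_{a_1}\cup e_{a_1a_2}\cup\cdots\cup e_{a_k}$ for $\phi_{s_i}(a)=a_1\cdots a_k$ shows that $e_a$ is sent to the sum of the tile edges $e_b$ with multiplicity equal to the number of occurrences of $b$ in $\phi_{s_i}(a)$, i.e. the $(b,a)$-entry of $M_{s_i}$. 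Hence the induced map on relative homology $H_1(K_{i+1},S_{i+1})\to H_1(K_i,S_i)$ is $M_{s_i}$ in the tile-edge basis, and dualising, the map $g_i^*\colon H^1(K_i,S_i)\to H^1(K_{i+1},S_{i+1})$ is $M_{s_i}^T$. Thus the direct system formed by the relative terms is exactly $(\mathbb{Z}^l,M_{s_i}^T)$.

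Finally I would take the direct limit of the ladder. Because direct limits over the directed set $\mathbb{N}$ are exact, the limiting sequence
$$0 \to \lim_{\longrightarrow}\tilde{H}^0(S_i) \to \lim_{\longrightarrow}(\mathbb{Z}^l,M_{s_i}^T) \to \lim_{\longrightarrow}(H^1(K_i),g_i^*) \to \lim_{\longrightarrow}(H^1(S_i),g_i^*) \to 0$$
is exact. By Theorem \ref{isomorphism} the third term is $\check{H}^1(\Omega)$, and by continuity of \Cech cohomology applied to $\Xi=\lim_{\longleftarrow}(S_i,g_i)$ the fourth term is $\check{H}^1(\Xi)$ and the first is $\tilde{H}^0(\Xi)$, giving the stated sequence. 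The main obstacle I expect is the bookkeeping around the degree-zero term: one must work with reduced cohomology throughout, so that the connectedness of $K_i$ kills the relative $H^0$, and must check that continuity of \Cech cohomology passes the reduced zeroth groups of the possibly disconnected subcomplexes $S_i$ correctly to $\tilde{H}^0(\Xi)$. The identification of the relative map with $M_{s_i}^T$ is conceptually the crux, but it follows directly from the explicit cellular description of $g_i$ on tile edges.
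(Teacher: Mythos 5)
Your proposal is correct and follows essentially the same route as the paper: the reduced long exact sequence of the pair $(K_i,S_i)$, the identification $H^1(K_i,S_i)\cong\tilde{H}^1(K_i/S_i)\cong\mathbb{Z}^l$ with $g_i^*$ acting as $M_{s_i}^T$, passage to the direct limit of the ladder using exactness of $\varinjlim$, and then Theorem \ref{isomorphism} together with continuity of \Cech cohomology. Your explicit verification that the relative map is $M_{s_i}^T$ via the cellular description of $g_i$ on tile edges is a detail the paper merely asserts, but the argument is the same.
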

		\begin{proof}
		Consider the sequence of pairs $(K_i,S_i)_{i\geq 0}$. To each pair there is associated a long exact sequence in reduced singular cohomology		
		$$\cdots \to H^{n-1}(S_i)\to H^n(K_i,S_i) \to H^n(K_i) \to H^n(S_i) \to H^{n+1}(K_i,S_i) \to \cdots$$
		which is trivial outside of degree $0$ and $1$. Moreover the spaces $K_i$ and  $K_i/S_i$ are connected, so $\tilde{H}^0(K_i)=0$ and $\tilde{H}^0(K_i,S_i)=0$. There is a commutative diagram		
		$$\begin{tikzpicture}[node distance=3cm, auto]
  		\node (00) {$\tilde{H}^0(S_i)$};
  		\node (10) [node distance=2cm, above of=00] {$\tilde{H}^0(S_{i+1})$};
  		\node (01) [right of=00] {$H^1(K_i,S_i)$};
  		\node (11) [right of=10] {$H^1(K_{i+1},S_{i+1})$};
  		\node (02) [right of=01] {$H^1(K_i)$};
		\node (12) [right of=11] {$H^1(K_{i+1})$};
  		\node (03) [right of=02] {$H^1(S_i)$};
  		\node (13) [right of=12] {$H^1(S_{i+1})$};
  		\node (04) [node distance=2.5cm, right of=03] {$0$};
  		\node (14) [node distance=2.5cm, right of=13] {$0$};
  		\node (0-1)[node distance=2.5cm, left of=00] {$0$};
  		\node (1-1)[node distance=2.5cm, left of=10] {$0$};
  		\draw[->] (00) to node [swap] {$g_i^*$} (10);
  		\draw[->] (01) to node [swap] {$g_i^*$} (11);
  		\draw[->] (02) to node [swap] {$g_i^*$} (12);
  		\draw[->] (03) to node [swap] {$g_i^*$} (13);
  		\draw[->] (00) to node [swap] {} (01);
  		\draw[->] (01) to node [swap] {} (02);
  		\draw[->] (02) to node [swap] {} (03);
  		\draw[->] (03) to node [swap] {} (04);
  		\draw[->] (10) to node [swap] {} (11);
  		\draw[->] (11) to node [swap] {} (12);
  		\draw[->] (12) to node [swap] {} (13);
  		\draw[->] (13) to node [swap] {} (14);
  		\draw[->] (0-1) to node [swap] {} (00);
  		\draw[->] (1-1) to node [swap] {} (10);
		\end{tikzpicture}$$
		whose rows are the exact sequences of the pairs $(K_i,S_i)$, and with vertical homomorphisms induced by the the maps $g_i$. Taking the direct limit along each column of this diagram produces an exact sequence
		$$0 \to \lim_{\longrightarrow}(\tilde{H}^0(S_i),g_i^*) \to \lim_{\longrightarrow}(H^1(K_i,S_i),g_i^*) \to \lim_{\longrightarrow}(H^1(K_i),g_i^*) \to \lim_{\longrightarrow}(H^1(S_i),g_i^*) \to 0.$$		
		 Note that $S_i$ is a closed subcomplex of the CW complex $K_i$, so $(K_i,S_i)$ is a good pair, and we can identify the relative cohomology group $H^n(K_i,S_i)$ with the cohomology of the quotient $H^n(K_i/S_i)$. For $n=1$ this is the first cohomology of a wedge of $l$ circles giving $H^1(K_i,S_i) \cong \mathbb{Z}^l$. Moreover the induced map $g_i^*$ on the relative cohomology acts as the transpose of the substitution matrix $M_{s_i}^T$ on the direct sum of $l$ copies of the integers.
		 
		 Putting this together with the fact that \Cech cohomology is isomorphic to singular cohomology for CW complexes, \Cech cohomology is continuous, and Theorem \ref{isomorphism} completes the proof.
		\end{proof}
		\begin{rem}
		We can say more than this if there is an appropriate notion of an eventual range, as there is in the classical case of a single substitution when $F=\{\phi\}$. In this case, $g_i^{k+1}(S_i)\subset g_i^{k}(S_i)$ for all $k\geq 0$ and as $g_i|_{S_i}$ is simplicial, and $S_i$ is a finite simplicial complex, this intersection must stabilise to some eventual range $S_{ER}=\bigcap_k g_i^k(S_i)$. Since $g_i$ restricts to a simplicial map on $S_{ER}$, the inverse limit of the maps $g_i|_{S_{i+1}}$ is just the inverse limit of a map which permutes simplices in the eventual range. The inverse limit of a sequence of homeomorphisms is homeomorphic to any space appearing in that limit, so $\displaystyle{\lim_{\longleftarrow}}(S_i,g_i)$ is homeomorphic to the eventual range $S_{ER}$, and then the cohomology of this inverse limit is readily determined as the simplicial cohomology of the eventual range.
		
		In our case where $(F,s)$ may not be a trivial system with $s$ constant, we can still determine the inverse limit of $S_i$ under the maps $g_i$ using a similar simplicial analysis, but an eventual range does not always exist.
		\end{rem}
		\begin{lem}\label{simplicial}
		Let $C$ be a finite simplicial complex of dimension $n$ and let $f_i\colon C_{i+1} \to C_i$ be a sequence of simplicial maps between a family $\{C_i\}_{i\geq 0}$ of subcomplexes of $C$. The inverse limit space $\displaystyle{\lim_{\longleftarrow}(C_i,f_i)}$ has the homeomorphism type of a finite simplicial complex of dimension at most $n$. \qed
		\end{lem}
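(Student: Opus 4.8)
The plan is to reduce the inverse system, without changing its inverse limit, to a sequence of surjective simplicial self-maps of a single recurring subcomplex, and then to exploit a rigidity phenomenon special to simplicial maps into a \emph{fixed} finite complex: such surjections can neither collapse nor wrap, so they are automatically automorphisms.

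First I would pass to eventual ranges. Writing $F_{i,j}=f_i\circ\cdots\circ f_{j-1}\colon C_j\to C_i$, the images $F_{i,j}(C_j)$ form a decreasing sequence of subcomplexes of the finite complex $C_i$, hence stabilise to a subcomplex $R_i=\bigcap_{j>i}F_{i,j}(C_j)$. For $j$ large, $R_{i+1}=F_{i+1,j}(C_j)$ and $R_i=F_{i,j}(C_j)=f_i(R_{i+1})$, so $f_i$ restricts to a \emph{surjective} simplicial map $R_{i+1}\twoheadrightarrow R_i$; and since every coordinate of a point of the inverse limit lies in $F_{i,j}(C_j)$ for all $j$, we have $\lim_{\longleftarrow}(C_i,f_i)=\lim_{\longleftarrow}(R_i,f_i)$. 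Because $C$ has only finitely many subcomplexes, some subcomplex $D$ occurs as $R_{i_k}$ for an infinite increasing sequence $i_0<i_1<\cdots$. Telescoping this subsequence identifies $\lim_{\longleftarrow}(R_i,f_i)$ with $\lim_{\longleftarrow}(D,h_k)$, where $h_k=F_{i_k,i_{k+1}}|_D\colon D\to D$ is a composition of surjective simplicial maps, hence itself a surjective simplicial self-map of $D$.

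The heart of the argument, and the step I expect to be the main obstacle, is the claim that \emph{every surjective simplicial self-map $h$ of a finite simplicial complex $D$ is a simplicial automorphism}. This is exactly the rigidity the hypotheses buy us: a simplicial map into a fixed complex sends each simplex affinely onto a single simplex and can never expand, so the expanding or collapsing behaviour that produces solenoids and other exotic continua as inverse limits is impossible. To prove the claim I would argue by downward induction on $\dim D=n$. Since $n$ is the top dimension, the relative interior of each $n$-simplex $\tau$ of $D$ can be covered only by an $n$-simplex $\sigma$ with $h|_\sigma$ an isomorphism onto $\tau$; surjectivity thus makes $h$ permute the $n$-simplices, sending each isomorphically onto another. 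Consequently $h$ carries the $(n-1)$-skeleton $D^{(n-1)}$ into itself and restricts to a surjection of it, so by induction $h|_{D^{(n-1)}}$ is an automorphism; combined with the permutation of top cells this forces $h$ to be a continuous bijection of the compact Hausdorff space $D$, that is, a simplicial automorphism. The delicate point to get right is precisely that surjectivity forbids any dimension drop, which is what keeps the top skeleton invariant and drives the induction.

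Finally, each $h_k$ lies in the finite group $\mathrm{Aut}(D)$, so the system $D\xleftarrow{h_0}D\xleftarrow{h_1}\cdots$ has homeomorphisms for bonding maps. The first-coordinate projection $(x_k)\mapsto x_0$ is then a homeomorphism onto $D$, with inverse $x_0\mapsto\big((h_0\circ\cdots\circ h_{k-1})^{-1}(x_0)\big)_k$. Hence $\lim_{\longleftarrow}(C_i,f_i)\cong D$, and since $D$ is a subcomplex of $C$ it is a finite simplicial complex of dimension at most $n$, as required.
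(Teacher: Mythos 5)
Your proof is correct: the reduction to eventual ranges and then to a recurring subcomplex $D$ with surjective simplicial bonding maps is sound, and the key rigidity claim---that a surjective simplicial self-map of a finite complex is an automorphism---does hold (it can even be seen at once: such a map must surject, hence biject, the finite vertex set, and a vertex-bijective simplicial surjection is an isomorphism, so your inductive skeleton argument can be shortcut). The paper gives no proof of this lemma (the \qed{} follows the statement directly), but your argument is precisely the ``similar simplicial analysis'' promised in the remark preceding it, which sketches the eventual-range/permutation idea only in the single-substitution case, so you have filled the omission in the intended way.
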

		\begin{cor}\label{cor1}
		Let $m$ be the rank of the first singular cohomology of $\Xi$. If $\Xi$ is connected, then $$\check{H}^1(\Omega) \cong \displaystyle{\lim_{\longrightarrow}}(\mathbb{Z}^l,M_{s_i}^T) \oplus \mathbb{Z}^m.$$
		\end{cor}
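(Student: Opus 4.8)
The plan is to feed the hypotheses directly into the four-term exact sequence of Theorem \ref{LES} and then split it. First I would use connectedness of $\Xi$: since $\Xi$ is connected its reduced zeroth \Cech cohomology vanishes, $\tilde{H}^0(\Xi)=0$, so the leftmost nontrivial term of the sequence in Theorem \ref{LES} drops out and we are left with the short exact sequence
$$0 \to \lim_{\longrightarrow}(\mathbb{Z}^l,M_{s_i}^T) \to \check{H}^1(\Omega) \to \check{H}^1(\Xi) \to 0.$$

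Next I would identify the cokernel term $\check{H}^1(\Xi)$. By construction each $S_i$ is a one-dimensional subcomplex (a finite graph), and $g_i|_{S_{i+1}}$ is simplicial with $g_i(S_{i+1})\subset S_i$, so Lemma \ref{simplicial} applies with $n=1$: the inverse limit $\Xi=\lim_{\longleftarrow}(S_i,g_i)$ is homeomorphic to a finite simplicial complex of dimension at most one, that is, a finite graph. For such a complex \Cech and singular cohomology coincide, and the first cohomology of a one-dimensional complex is free abelian; hence $\check{H}^1(\Xi)\cong\mathbb{Z}^m$, where $m$ is its rank as fixed in the statement.

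Finally, since $\mathbb{Z}^m$ is free abelian and therefore projective, the short exact sequence above splits, yielding
$$\check{H}^1(\Omega) \cong \lim_{\longrightarrow}(\mathbb{Z}^l,M_{s_i}^T) \oplus \mathbb{Z}^m$$
as required. I do not anticipate a serious obstacle, since every ingredient is already in place from the preceding results. The only point requiring care is the identification of $\check{H}^1(\Xi)$ as a genuinely free group $\mathbb{Z}^m$ rather than merely a group of rank $m$: this rests on the one-dimensionality of $\Xi$ supplied by Lemma \ref{simplicial}, which rules out torsion in degree one, together with the agreement of \Cech and singular cohomology for the finite complex $\Xi$. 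Once freeness is secured, projectivity and hence the splitting of the sequence are immediate.
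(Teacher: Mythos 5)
Your argument is correct and follows the paper's own proof essentially step for step: connectedness kills $\tilde{H}^0(\Xi)$, Lemma \ref{simplicial} identifies $\Xi$ as a finite one-dimensional complex so that $\check{H}^1(\Xi)\cong\mathbb{Z}^m$ is free, and projectivity splits the resulting short exact sequence. The only detail the paper makes explicit that you leave implicit is that all the $S_i$ are subcomplexes of the single fixed complex $S_F$ built from all vertex edges $e_{ab}$, $ab\in\mathcal{A}^2$, which is the hypothesis needed to invoke Lemma \ref{simplicial}.
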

		\begin{proof}
		Each of the $S_i$ is a subcomplex of the complex $S_F=\left( \bigcup_{ab\in\mathcal{A}^2} e_{ab}\right)/{\sim}$ which is the complex built from all possible vertex edges $e_{ab}$ with $ab\in\mathcal{A}^2$. The map $g_i$ restricted to $S_{i+1}$ is a simplicial map for each $i$, and so the inverse limit $\Xi$ is a one-dimensional simplicial complex by Lemma \ref{simplicial}. The \Cech cohomology of a simplicial complex is isomorphic to the singular cohomology, giving $\check{H}^1(\Xi) \cong \mathbb{Z}^m$. If  $\Xi$ is connected, then $\tilde{H}^0(\Xi)=0$, and since $\mathbb{Z}^m$ is free abelian, hence projective, the short exact sequence of Theorem \ref{LES} splits to give the result.
		\end{proof}
		If $\Xi$ is not connected then the direct limit of transpose matrices must be quotiented by $k-1$ copies of the integers $\mathbb{Z}$ where $k$ is the number of connected components in $\Xi$. The exact sequence of Theorem \ref{LES} tells you how this group sits inside the direct limit.
		\begin{example}\label{Ex:ArnouxRauzy}
		As a basic example, we can use this result to provide a short proof that $\check{H}^1(\Omega) \cong \mathbb{Z}^d$ for a tiling space $\Omega$ associated to an \emph{Arnoux-Rauzy} sequence on $d$-letters. The Arnoux-Rauzy sequences are a special class of sequences, introduced by Arnoux and Rauzy in \cite{ArnouxRauzy}, belonging to the family of episturmian sequences\footnote{The set of $d$-episturmian sequences is the unique set of sequences on $d$ letters which are closed under reversal and have at most one right special factor of length $k$ for each $k\geq 1$.}, which generalise Sturmian sequences (the case where $d=2$). For an alphabet $\mathcal{A}=\{a_1,\ldots, a_d\}$ on $d$ letters, the Arnoux-Rauzy substitutions are given by the $d$ substitutions
		$$\mu_i:\left\{
			\begin{array}{rcl}
			a_i & \mapsto & a_i \\
			a_j & \mapsto & a_ja_i,\quad j\neq i
			\end{array}
			\right.
		$$
		for each $i\in\{1,\ldots, d\}$. The Arnoux-Rauzy sequences are then the sequences that are admitted by the mixed substitution systems $(F,s)$ where $F=\{\mu_1,\ldots,\mu_d\}$ and $s$ contains an infinite number of terms of each type (to enforce primitivity). So $(F,s)$ is primitive, even though the individual substitutions are not, and is well known to have the unique composition property.
		
		If we consider the map $g_{\mu_i}$, for fixed $i$, acting on the subcomplex of vertex edges (at any level in the inverse limit), the image will always be a subset of the union of the edges $\bigcup_{j\in\{1,\ldots, d\}} e_{ij}$. We can see this by noting that $g_{\mu_i}(e_{kj}) = e_{ij}$ for any $k$. This subcomplex is contractible, hence the image of $g_{\mu_i}|_{S_i}$ is contractible, and so we can conclude that $\Xi$ is also contractible. By Corollary \ref{cor1}, it follows that $\check{H}^1(\Omega_{F,s})$ is isomorphic to $\displaystyle{\lim_{\longrightarrow}(\mathbb{Z}^d, M^T_{\mu_i})}$, but note that each of the transition matrices is invertible over the integers $\mathbb{Z}$ and so actually the direct limit of matrices is just isomorphic to $\mathbb{Z}^d$. This completes the proof.
		\end{example}
		In \cite{GahlerMaloney}, it was shown that the maximum rank of $\check{H}^1$ for a mixed substitution on $d$ letters is $d^2-d+1$. We show that the same result can be reached with a basic combinatorial argument using BD complexes. Let $\operatorname{rk}(G)$ be the rank of the group $G$.
		\begin{prop}[G\"{a}hler-Maloney]
		For a primitive, recognisable mixed substitution system $(F,s)$ on an alphabet $|\mathcal{A}|=d$, the rank of the \Cech cohomology group, $\operatorname{rk}(\check{H}^1(\Omega_{F,s}))$ is bounded above by $d^2-d+1$.
		\end{prop}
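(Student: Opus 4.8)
The plan is to prove the uniform bound $\operatorname{rk}(H^1(K_i)) \leq d^2-d+1$ at each finite approximant and then push it to the limit. By Theorem \ref{isomorphism} we have $\check{H}^1(\Omega_{F,s}) \cong \lim_{\longrightarrow}(H^1(K_i),g_i^*)$, so it is enough to control the ranks of the terms $H^1(K_i)$. Here I would invoke the elementary fact that the rank of a direct limit of abelian groups never exceeds the supremum of the ranks of its terms: any finite $\mathbb{Q}$-linearly independent subset of the limit lifts, by directedness, to a single stage $H^1(K_N)$ on which its preimages remain independent, whence $\operatorname{rk}(\lim_{\longrightarrow}(H^1(K_i),g_i^*)) \leq \sup_i\operatorname{rk}(H^1(K_i))$. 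The problem therefore reduces entirely to a cell count on each $K_i$.

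Each $K_i$ is a connected finite $1$-dimensional CW complex (its connectedness is recorded in the proof of Theorem \ref{LES}), so $H^1(K_i)$ is free abelian of rank equal to its first Betti number $E_i-V_i+1$, where $V_i$ and $E_i$ count vertices and edges. By primitivity every letter of $\mathcal{A}$ occurs in the language of $(F,\sigma^i(s))$, and each such letter $a$ contributes precisely two vertices, namely the classes of $(\epsilon,a)$ and $(1-\epsilon,a)$; these are distinct in $K_i$ because they are the two endpoints of the tile edge $e_a$. Hence $V_i=2d$. The edges of $K_i$ are the $d$ tile edges $e_a$ together with the vertex edges $e_{ab}$, one for each admitted pair $ab\in\mathcal{L}^2_{F,\sigma^i(s)}$; since an admitted pair involves only letters of $\mathcal{A}$ there are at most $d^2$ of the latter, so $E_i = d + |\mathcal{L}^2_{F,\sigma^i(s)}| \leq d+d^2$.

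Combining these counts yields
$$\operatorname{rk}(H^1(K_i)) = E_i - V_i + 1 = \bigl(d+|\mathcal{L}^2_{F,\sigma^i(s)}|\bigr) - 2d + 1 = |\mathcal{L}^2_{F,\sigma^i(s)}| - d + 1 \leq d^2 - d + 1,$$
uniformly in $i$, and the reduction of the first paragraph then gives $\operatorname{rk}(\check{H}^1(\Omega_{F,s})) \leq d^2-d+1$. I expect the only subtle point to be the passage to the direct limit: one should resist computing ranks through the exact sequence of Theorem \ref{LES}, since neither $\tilde{H}^0(\Xi)$ nor $\check{H}^1(\Xi)$ is individually well controlled under the limit, and it is precisely by bounding the single group $H^1(K_i)$ that the estimate survives. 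The cell count itself is routine and can equivalently be read off the short exact sequence $0\to\tilde{H}^0(S_i)\to H^1(K_i,S_i)\to H^1(K_i)\to H^1(S_i)\to 0$ with $H^1(K_i,S_i)\cong\mathbb{Z}^d$, the remaining contribution $|\mathcal{L}^2_{F,\sigma^i(s)}|-2d+1\leq (d-1)^2$ being the first Betti number of the bipartite vertex-edge subcomplex $S_i$.
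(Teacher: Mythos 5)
Your argument is correct and follows essentially the same route as the paper: a cell count on each $K_i$ ($2d$ vertices, at most $d^2+d$ edges) giving $\operatorname{rk}(H^1(K_i))\leq d^2-d+1$ via the first Betti number of a connected $1$-complex (equivalently, the Euler characteristic bound the paper uses), followed by the observation that the rank of the direct limit cannot exceed the uniform bound on the ranks of the approximants. Your explicit justification of that last step, and the remark that one should bound $H^1(K_i)$ directly rather than work through the exact sequence of Theorem \ref{LES}, are sensible elaborations of what the paper states more briefly.
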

		\begin{proof}
		The BD complexes $K_i$ of $(F,s)$ have $2d$ vertices (given by each end of the tile edges $e_a$ for $a\in\mathcal{A}$), and at most $d^2+d$ edges ($d^2$ vertex edges of the form $e_{ab}$ for $a,b\in\mathcal{A}$ and $d$ tile edges $e_a$). So the Euler characteristic is bounded below by $V-E=2d-(d^2+d)=d-d^2$, giving $\chi\geq d-d^2$. By definition of the Euler characteristic, we have $\chi=\operatorname{rk}(H^0(K_i))-\operatorname{rk}(H^1(K_i))$, and so as $K_i$ is necessarily connected, $\operatorname{rk}(H^1(K_i))\leq d^2 - d + 1$. Taking the direct limit of $g_i^*$ acting on $H^1(K_i)$ then tells us that $\operatorname{rk}(\check{H}^1(\Omega))\leq d^2 - d + 1$, because the rank of the limit cannot exceed the bound of the ranks of the approximants.
		\end{proof}
%
		G\"{a}hler and Maloney also showed in \cite{GahlerMaloney}, with a family of examples, that this bound is tight.

		\subsection{The Universal Barge-Diamond Complex}
		A problem to contend with in the construction of the BD complex for a mixed substitution system $(F,s)$ is that there are a potentially large set of complexes and maps which can appear in the inverse limit. It may be helpful for theoretical (though likely not computational) reasons to instead build an inverse system whose approximants are all the same, and where the family of maps appearing in the system are only as large as the family of substitutions $F$. This was achieved for the Anderson-Putnam complexes which appear in \cite{GahlerMaloney} where the so-called \emph{universal Anderson-Putnam complex} was introduced. We are also able to achieve this depending on a compatibility condition of the sequence $s$ of substitutions in the system. This `self-correcting' condition is similar to the one introduced in \cite{GahlerMaloney}.
		
		First, let us define the complex that will be our candidate universal BD complex.
		\begin{definition}
		Let $\mathcal{A}$ be a finite alphabet and let $F=\{\phi_0, \phi_1, \ldots, \phi_k\}$ be a set of substitutions on $\mathcal{A}$. Let $\displaystyle{\epsilon = \min_{a\in\mathcal{A}, \phi\in F} \left\{ \frac{1}{2|\phi(a)|} \right\}}$ be a small positive real number. For $a\in\mathcal{A}$, let
		$$e_a=[\epsilon,1-\epsilon]\times\{a\}$$
		and for $ab\in\mathcal{A}^2$, let
		$$e_{ab}=[-\epsilon,\epsilon] \times \{ab\}.$$
		The \emph{universal Barge-Diamond complex} for $F$ is denoted by $K_{F}$ and is defined to be
		$$K_{F} = \left( \bigcup_{a\in\mathcal{A}} e_a \cup \bigcup_{ab\in\mathcal{A}^2} e_{ab} \right)/{\sim}$$
		where for all $a,b,c\in\mathcal{A}$,
		$$(1-\epsilon,a) \sim (-\epsilon,ab) \sim (-\epsilon, ac) \quad \mbox{and} \quad (\epsilon,a) \sim (\epsilon,ba) \sim (\epsilon, ca).$$
		We also define the \emph{subcomplex of vertex edges} $S_{F}$ of $K_{F}$ by
		 $$S_{F}=\bigcup_{ab\in\mathcal{A}^2} e_{ab}/{\sim}.$$
		\end{definition}
		See Figure \ref{fig:M3} for the universal BD complex for a set of substitutions $F$ on an alphabet on three letters, $\mathcal{A}=\{a,b,c\}$.
		\begin{figure}[h]
		$$
		\begin{tikzpicture}
		\clip (-5,-3) rectangle (5,4.8) ;
		\def \n {6}
		\def \radius {1.5cm}
		\def \margin {3} 

		\foreach \s in {1,...,\n}
		{
  			\node[draw, fill, circle, minimum size=0.15] (node\s) at ({360/\n * (\s - 1)}:\radius) {};
  		}

  			\draw[->, ultra thick] (node1) 
    		to node [right] {$ca$} (node2);
    		\draw[->, ultra thick] (node3) 
    		to node [above] {$aa$} (node2);
    		\draw[->, ultra thick] (node3) 
    		to node [left] {$ab$} (node4);
    		\draw[->, ultra thick] (node5) 
    		to node [left] {$bb$} (node4);
    		\draw[->, ultra thick] (node5) 
    		to node [below] {$bc$} (node6);
    		\draw[->, ultra thick] (node1) 
    		to node [right] {$cc$} (node6);
    		
    		\draw[->, ultra thick, out=60, in=120, looseness=7] (node2) 
    		to node [below] {$a$} (node3);
    		\draw[->, ultra thick, out=180, in=240, looseness=7] (node4) 
    		to node [right] {$b$} (node5);
    		\draw[->, ultra thick, out=300, in=0, looseness=7] (node6) 
    		to node [left] {$c$} (node1);

    		\draw[->, ultra thick, out=40, in=260] (node5)
    		to node [right] {$ba$} (node2);
    		\draw[-, line width=6pt, out=280, in=140, draw=white] (node3) 
    		to (node6);
    		\draw[->, ultra thick, out=280, in=140] (node3) 
    		to node [left] {$ac$} (node6);
    		\draw[-, line width=6pt, draw=white, out=160, in=20] (node1) 
    		to (node4);
    		\draw[->, ultra thick, out=160, in=20] (node1) 
    		to node [above] {$cb$} (node4);
    		\begin{scope}
    		\clip (0,0) rectangle (1,0.5);
    		\draw[-, line width=6pt, draw=white, out=40, in=260] (node5)
    		to (node2);
    		\end{scope}
    		\begin{scope}
    		\clip (0,0) rectangle (1,0.5);
    		\draw[->, ultra thick, out=40, in=260] (node5)
    		to (node2);
    		\end{scope}
    		
    		\node [draw, circle, minimum size=4cm, thick, dashed] (xx) [label=$S_F$]{};

		\end{tikzpicture}
		$$
		\caption{The universal Barge-Diamond complex $K_F$ for $\mathcal{A}=\{a,b,c\}$.}
		\label{fig:M3}
		\end{figure}
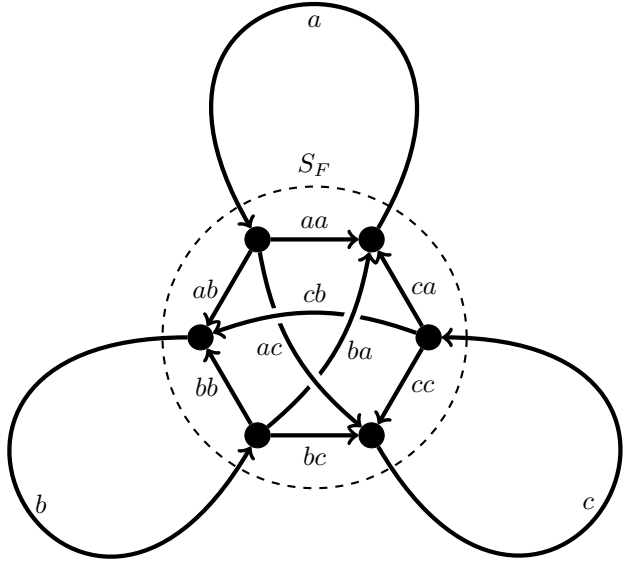
		\begin{definition}
		For each $\phi\in F$, we define a map $g_{\phi} \colon K_{F}\to K_{F}$ on the universal BD complex for $F$ by, if $\phi(a)=a_1a_2 \ldots a_k$, $$g_{\phi}(e_a) := e_{a_1} \cup e_{a_1a_2} \cup\cdots \cup e_{a_{k-1}a_k} \cup e_{a_k}$$ in an orientation preserving and uniformly expanding way, and on vertex edges by, if $r(\phi(a))=a_k$ and $l(\phi(b))=b_1$, $$g_{\phi}(e_{ab}) := e_{a_kb_1}.$$
		\end{definition}
		\begin{definition}
		Let $(F,s)$ be a mixed substitution system on the alphabet $\mathcal{A}$. Let $\mathcal{A}^2$ be the set of two-letter words in symbols from $\mathcal{A}$. If for every $i\geq 0$, there exists an $N\geq 1$ such that for all $ab\in\mathcal{A}^2$ there is $cd\in\mathcal{L}^2_{F,\sigma^{i}(s)}$ such that $$r(\phi_{s[i,i+N]}(a))l(\phi_{s[i,i+N]}(b))=cd,$$  then we say $(F,s)$ is \emph{self-correcting}.
		\end{definition}
		\begin{rem}
		The self-correcting property has been introduced for similar reasons as to why the property was introduced in \cite{GahlerMaloney}. Note that the definitions are not the same, and our definition is tailored to work specifically in the Barge-Diamond setting. Self-correcting substitution systems are sufficient to allow us to add cells to the complexes appearing in the inverse limit representation of the tiling space of the system, and not change the cohomology of the inverse limit. In particular, we can use the same universal BD complex at each level of the inverse system if $(F,s)$ is self correcting.
		\end{rem}
		\begin{example}
		Let $\mathcal{A}=\{a,b\}$ be an alphabet on two letters and let $F=\{\phi_0\}$ be the single substitution given by
		$$
		\phi_0:\left\{
			\begin{array}{rcl}
			a & \mapsto & b \\
			b & \mapsto & ba
			\end{array}
			\right. ,
		$$
		the Fibonacci substitution. There is only one possible sequence of substitutions to consider which is the constant sequence $s=(0,0,\ldots)$. We see that
		$$
		\begin{array}{rcl}
		r(\phi_0(a))l(\phi_0(a)) & = & bb\\
		r(\phi_0(a))l(\phi_0(b)) & = & bb
		\end{array}
		\begin{array}{rcl}
		r(\phi_0(b))l(\phi_0(a)) & = & ab\\
		r(\phi_0(b))l(\phi_0(b)) & = & ab
		\end{array}
		$$
		which are all admitted two-letter strings in $\mathcal{L}^2_{\phi_0}=\mathcal{L}^2_{F,\sigma^i(s)}=\{ab,ba,bb\}$ for all $i\geq 0$, and so $(F,s)$ is self-correcting.
		\end{example}
		\begin{example}
		The non-degenerate mixed Chacon substitution systems which appear in Section \ref{chacon} are all automatically self-correcting because their set of admitted two-letter words is complete. That is, $\mathcal{L}^2_{F,\sigma^i(s)}=\mathcal{A}^2$ for all $i\geq 0$.
		\end{example}
		\begin{example}
		The mixed substitution system associated to an Arnoux-Rauzy sequence on an alphabet with $d$ letters, as introduced in Example \ref{Ex:ArnouxRauzy}, can be seen to be self-correcting. Let $m\geq 0$ be fixed and suppose $s_m=i$. We have that $r(\mu_i(a_ja_k))l(\mu_i(a_ja_k))=a_ia_j$ which is clearly admitted by $(F,\sigma^m(s))$ because by primitivity, $a_j$ appears somewhere in the sequences appearing in $\Sigma_{F,\sigma^{m+1}}$, and preceded by some letter $a_{j'}$. The substituted word $\mu_i(a_{j'} a_j)$ is $a_i a_j a_i$ if $j'=i$, or $a_{j'} a_i a_j a_i$ if $j'\neq i$, both of which contain $a_i a_j$. So all two-letter words are corrected by $(F,\sigma^m(s))$ after one substitution for all $m\geq 0$, hence $(F,s)$ is self correcting.
		\end{example}
		\begin{example}
		Let $F=\{\phi_0\}$ be the single substitution given by
		$$
		\phi_0:\left\{
			\begin{array}{rcl}
			a & \mapsto & aaba \\
			b & \mapsto & bab
			\end{array}
			\right. .
		$$
		Again, there is only one possible sequence of substitutions to consider which is the constant sequence $s=(0,0,\ldots)$. This time, we see that
		$$
		\begin{array}{rcl}
		r(\phi_0(a))l(\phi_0(a)) & = & aa\\
		r(\phi_0(a))l(\phi_0(b)) & = & ab
		\end{array}
		\begin{array}{rcl}
		r(\phi_0(b))l(\phi_0(a)) & = & ba\\
		r(\phi_0(b))l(\phi_0(b)) & = & bb
		\end{array}
		$$
		but the only admitted two-letter strings for $\phi_0$ are $\mathcal{L}^2_{\phi_0}=\mathcal{L}^2_{F,\sigma^i(s)}=\{aa,ab,ba\}$ for all $i\geq 0$. Since the transition of $bb$ is fixed under substitution, this pair will never be corrected to an admitted pair, and so $(F,s)$ is \textbf{not} self-correcting because $bb$ is not admitted by $\{\phi_0\}$.
		\end{example}
		\begin{thm}\label{Universalhomeomorphism}
		For a self-correcting mixed substitution system $(F,s)$, there is an equality $$\lim_{\longleftarrow}(K_i, g_{s_i}) = \lim_{\longleftarrow}(K_{F}, g_{\phi_{s_i}})$$ of the inverse limits of induced cellular substitution maps on the usual Barge-Diamond complexes $K_i$ and on the universal Barge-Diamond complex $K_F$, seen as subsets of $\prod_{i\geq 0} K_F$.
		\end{thm}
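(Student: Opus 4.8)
The plan is to prove the set equality directly, viewing both inverse limits as subsets of $\prod_{i\geq 0} K_F$ and establishing the two inclusions separately. The key structural observation is that $K_i$ is literally a subcomplex of $K_F$ for every $i$: the two complexes share the same tile edges $e_a$, while $K_i$ contains precisely those vertex edges $e_{ab}$ with $ab\in\mathcal{L}^2_{F,\sigma^i(s)}$ and $K_F$ contains all of them. Thus a compatible sequence for $(K_i,g_{s_i})$ is automatically a sequence in $\prod_{i\geq 0}K_F$, and the two inverse limits are comparable as sets.

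For the inclusion $\subseteq$, I would first check that $g_{\phi_{s_i}}$ restricts to $g_{s_i}$ on $K_{i+1}$. Both maps are given by the same combinatorial formula for the substitution $\phi_{s_i}$: on a tile edge $e_a$ the vertex edges $e_{a_ja_{j+1}}$ that appear are consecutive pairs of the admitted word $\phi_{s_i}(a)$, hence lie in $\mathcal{L}^2_{F,\sigma^i(s)}$, and on an admitted vertex edge $e_{ab}$ the image $e_{r(\phi_{s_i}(a))\,l(\phi_{s_i}(b))}$ is again admitted. So the image of $K_{i+1}$ already lands in $K_i$, and $g_{\phi_{s_i}}|_{K_{i+1}}=g_{s_i}$. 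Consequently any sequence $(x_j)$ with $x_j\in K_j$ and $g_{s_j}(x_{j+1})=x_j$ also satisfies $g_{\phi_{s_j}}(x_{j+1})=x_j$, giving $\subseteq$.

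The substantive direction is $\supseteq$, and this is exactly where self-correction enters. The crucial preliminary is that the cellular maps compose as substitutions, both on tile and vertex edges: a routine check gives $g_{\phi}\circ g_{\psi}=g_{\phi\psi}$, the point for vertex edges being $r(\phi\psi(a))=r(\phi(r(\psi(a))))$ and dually for $l$. Hence $g_{\phi_{s[i,i+N]}}(e_{ab})=e_{cd}$ with $cd=r(\phi_{s[i,i+N]}(a))\,l(\phi_{s[i,i+N]}(b))$, and the self-correcting hypothesis says precisely that, for a suitable $N=N(i)$, this $cd$ is admitted, i.e. $g_{\phi_{s[i,i+N]}}(S_F)\subseteq S_i$. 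Since tile edges always map into genuinely admitted edges (their interior vertex edges are consecutive pairs of substituted words), this yields $g_{\phi_{s[i,i+N]}}(K_F)\subseteq K_i$.

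Given any $(y_j)\in\lim_{\longleftarrow}(K_F,g_{\phi_{s_j}})$, iterating the compatibility relations gives $y_i=g_{\phi_{s[i,i+N]}}(y_{i+N+1})$, and since $y_{i+N+1}\in K_F$ the containment above forces $y_i\in K_i$. Applying this at every level $i$ (self-correction holds for all $i$) shows each coordinate lies in the corresponding $K_j$, and the already-established agreement $g_{\phi_{s_j}}|_{K_{j+1}}=g_{s_j}$ shows the sequence satisfies the defining relations of $\lim_{\longleftarrow}(K_i,g_{s_i})$. I expect the main obstacle to be essentially bookkeeping: verifying the composition law $g_\phi\circ g_\psi=g_{\phi\psi}$ carefully on both edge types and tracking that the self-correcting $N$ may be chosen level-by-level, after which the containment $g_{\phi_{s[i,i+N]}}(K_F)\subseteq K_i$ makes the conclusion immediate. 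No separate topological argument is needed, since equality of the underlying point sets in $\prod_{i\geq 0}K_F$ automatically gives equality as subspaces.
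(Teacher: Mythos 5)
Your proposal is correct and follows essentially the same route as the paper: both use the containment $K_i\subset K_F$ for the easy inclusion, and both use the self-correcting hypothesis to show that $g_{\phi_{s[i,i+N]}}$ carries all of $K_F$ (in particular every non-admitted vertex edge) into $K_i$, forcing each coordinate of a compatible sequence to lie in $K_i$. The only difference is presentational — you argue the hard inclusion directly via $y_i=g_{\phi_{s[i,i+N]}}(y_{i+N+1})\in K_i$ and make explicit the composition law $g_\phi\circ g_\psi=g_{\phi\psi}$ and the restriction $g_{\phi_{s_i}}|_{K_{i+1}}=g_{s_i}$, where the paper phrases the same step as a contradiction and leaves those verifications implicit.
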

		\begin{proof}
		It is clear that $\displaystyle{\lim_{\longleftarrow}(K_i, g_{\phi_{s_i}})}$ is a subset of $\displaystyle{\lim_{\longleftarrow}(K_{F},g_{\phi_{s_i}})}$ because for every $i$, $K_i \subset K_F$. In order to show the other inclusion, we make use of the self-correcting property of $(F,s)$.
		
		Pick a point $x=(x_0,x_1,x_2,\ldots) \in \displaystyle{\lim_{\longleftarrow}(K_{F},g_{\phi_{s_i}})}$. If $x_i$ is a point in $K_i$ for every $i\geq 0$, then $x\in \displaystyle{\lim_{\longleftarrow}(K_i, g_{\phi_{s_i}})}$ and we are done. If there is some $i\geq 0$ such that $x_i\notin K_i$ then $x_i$ must be in the interior of a vertex edge $e_{ab}$ of $S_F$ which is not a vertex edge appearing in $S_i$. Let $N\geq 1$ and be such that for all $ab\in\mathcal{A}^2$, there are $c,d\in\mathcal{A}$ such that $$r(\phi_{s[i,i+N]}(a))l(\phi_{s[i,i+N]}(b))=cd$$ and with $cd\in\mathcal{L}_{F,\sigma^{i+N}(s)}$. The integer $N$ exists because $(F,s)$ is self-correcting. Note that for all $z\in S_F$, we have $g_{\phi_{s[i,i+N]}}(z)\in S_i$ and so there exists no $x_{i+N}$ such that $g_{\phi_{s[i,i+N]}}(x_{i+N})=x_i$. From the definition of the inverse limit then, such an $x=(x_0,x_1,x_2,\ldots)$ with some $x_i$ in $K_F\setminus K_i$ cannot exist. It follows that for all $i\geq 0$, we must have $x_i\in K_i$ and so $x \in \displaystyle{\lim_{\longleftarrow}(K_i, g_{\phi_{s_i}})}$. Hence $\displaystyle{\lim_{\longleftarrow}(K_i, g_{\phi_{s_i}}) = \lim_{\longleftarrow}(K_{F},g_{\phi_{s_i}})}$.
		\end{proof}		
		\begin{cor}
		For a primitive, recognisable, self-correcting mixed substitution system $(F,s)$, there is an isomorphism $$\check{H}^1(\Omega) \cong \lim_{\longrightarrow}(H^1(K_{F}),g_{\phi_{s_i}}^*)$$ between the \Cech cohomology of the mixed substitution tiling space and the direct limit of the induced homomorphisms on the cohomology of the universal Barge-Diamond complex for $F$.
		\end{cor}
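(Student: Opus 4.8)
The plan is to combine the two inverse-limit descriptions already established with the continuity of \Cech cohomology, the only place the self-correcting hypothesis being genuinely used being the identification of inverse limits supplied by Theorem~\ref{Universalhomeomorphism}. First I would record that the cellular bonding map $g_i\colon K_{i+1}\to K_i$ of the usual complexes is precisely the restriction $g_{\phi_{s_i}}|_{K_{i+1}}$ of the universal map: this is immediate from comparing the two defining formulas cell-by-cell, once one checks that for an admitted pair $ab\in\mathcal{L}^2_{F,\sigma^{i+1}(s)}$ the transition $r(\phi_{s_i}(a))l(\phi_{s_i}(b))$ lands in $\mathcal{L}^2_{F,\sigma^i(s)}$, so that the restriction really does map $K_{i+1}$ into $K_i$. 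With this in hand, Theorem~\ref{Universalhomeomorphism} gives the equality of subspaces of $\prod_{i\ge 0}K_F$
$$\lim_{\longleftarrow}(K_i, g_i) = \lim_{\longleftarrow}(K_F, g_{\phi_{s_i}}),$$
and it is crucial here that this is a genuine equality of spaces and not merely a homeomorphism, since it lets me pass \Cech cohomology across without further argument.

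Next I would apply continuity of \Cech cohomology on the universal side. Each $K_F$ is a finite CW complex, hence compact Hausdorff, so the inverse limit is compact Hausdorff and continuity yields
$$\check{H}^1\!\left(\lim_{\longleftarrow}(K_F, g_{\phi_{s_i}})\right) \cong \lim_{\longrightarrow}(\check{H}^1(K_F), g_{\phi_{s_i}}^*) \cong \lim_{\longrightarrow}(H^1(K_F), g_{\phi_{s_i}}^*),$$
where the second isomorphism uses that \Cech and singular cohomology agree on CW complexes, so that I may replace $\check{H}^1(K_F)$ by $H^1(K_F)$ throughout the direct system.

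Finally I would tie the left-hand side back to $\Omega$. By Theorem~\ref{homeomorphism} we have $\Omega\cong\lim_{\longleftarrow}(K_i,f_i)$, and by the homotopy $f_i\simeq g_i$ used in the proof of Theorem~\ref{isomorphism} the induced maps agree, $f_i^*=g_i^*$; continuity of \Cech cohomology therefore gives
$$\check{H}^1(\Omega) \cong \lim_{\longrightarrow}(H^1(K_i),g_i^*) \cong \check{H}^1\!\left(\lim_{\longleftarrow}(K_i, g_i)\right).$$
Substituting the equality of inverse limits from the first paragraph and then the continuity computation of the second completes the chain, yielding $\check{H}^1(\Omega)\cong\lim_{\longrightarrow}(H^1(K_F),g_{\phi_{s_i}}^*)$ as required. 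The only step that genuinely needs the hypotheses is the identification of the two inverse limits, and I expect the sole point requiring care to be the verification that no point of $\lim_{\longleftarrow}(K_F,g_{\phi_{s_i}})$ has a coordinate lying in $K_F\setminus K_i$---exactly the content Theorem~\ref{Universalhomeomorphism} extracts from self-correction---with everything else being formal functoriality and continuity.
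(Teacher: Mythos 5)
Your proposal is correct and follows essentially the same route as the paper: invoke Theorem~\ref{isomorphism} to reduce to comparing the two direct limits of cohomology groups, then use the equality of inverse limits from Theorem~\ref{Universalhomeomorphism} together with continuity of \Cech cohomology to identify $\lim_{\longrightarrow}(H^1(K_i),g_i^*)$ with $\lim_{\longrightarrow}(H^1(K_F),g_{\phi_{s_i}}^*)$. The paper compresses this into one sentence; your version merely makes explicit the intermediate passage through $\check{H}^1(\lim_{\longleftarrow}(K_i,g_i))$ and the observation that $g_i$ is the restriction of $g_{\phi_{s_i}}$, both of which are consistent with the paper's argument.
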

		\begin{proof}
		Making use of Theorem \ref{isomorphism} it suffices to show that there is an isomorphism of direct limits $$\displaystyle{\lim_{\longrightarrow}(H^1(K_{F}),(g_{\phi_{s_i}})^*)} \cong \displaystyle{\lim_{\longrightarrow}(H^1(K_i),(g_{\phi_{s_i}})^*)}$$ which follows from Theorem \ref{Universalhomeomorphism} and the continuity and functorality of \Cech cohomology.
		\end{proof}

%
%
%
%
%

	\section{Uncountability of Set of Cohomology Groups}\label{chacon}
	The machinery is now in place to be able to introduce our new example and prove the main result, Theorem \ref{MainTheorem}.
		\subsection{The Mixed Chacon Substitution System}
		Let $\mathcal{A}=\{a,b\}$ and let $F=\{\psi_0, \psi_1, \psi_2\}$ be the set of substitutions $\psi_i\colon \mathcal{A}^* \to \mathcal{A}^*$ given by
		$$\begin{array}{ccc}
			\psi_0:\left\{
			\begin{array}{rcl}
			a & \mapsto & aabba \\
			b & \mapsto & b
			\end{array}
			\right., 
			&
			
			\psi_1:\left\{
			\begin{array}{rcl}
			a & \mapsto & aab \\
			b & \mapsto & bba 
			\end{array}
			\right.,
			&
			
			\psi_2:\left\{
			\begin{array}{rcl}
			a & \mapsto & a \\
			b & \mapsto & bbaab 
			\end{array}
			\right.
		\end{array}.$$
		We call $(F,s)$ a \emph{mixed Chacon substitution system} because $\psi_0$ and $\psi_2$ are each mutually locally derivable to the classical Chacon substitution. The substitution $\psi_1$ is not strictly necessary to achieve the final result\footnote{In fact we only need any two of the three in order to still be able to use the uncountability result of Goodearl and Rushing.}, but it seems natural to include $\psi_1$ in the system for aesthetic reasons.
		
		There are associated substitution matrices $$M_0=\begin{pmatrix}3 & 0\\ 2 & 1\end{pmatrix},\: M_1=\begin{pmatrix}2 & 1\\ 1 & 2\end{pmatrix},\: M_2=\begin{pmatrix}1 & 2\\ 0 & 3\end{pmatrix}.$$		
		Let $L=\begin{pmatrix}0 & 1\\ 1 & 1\end{pmatrix}$ which has inverse given by $L^{-1}=\begin{pmatrix}-1 & 1\\ 1 & 0\end{pmatrix}$ and note the identities
		\begin{eqnarray}
			\label{matrices1}LM_0^T L^{-1} & = \begin{pmatrix}1 & 0\\ 0 & 3\end{pmatrix} = B_0 \\
			\label{matrices2}LM_1^T L^{-1} & = \begin{pmatrix}1 & 1\\ 0 & 3\end{pmatrix} = B_1 \\
			\label{matrices3}LM_2^T L^{-1} & = \begin{pmatrix}1 & 2\\ 0 & 3\end{pmatrix} = B_2
		\end{eqnarray}
		
		Let $\alpha\in\mathbb{Z}_3$ be a $3$-adic integer with digits $\ldots\epsilon_2 \epsilon_1 \epsilon_0$. We only wish to consider a specific family of such $3$-adic integers. Let $s_{\alpha}=(s_0, s_1, s_2, \ldots)$ be the associated sequence of digits appearing in $\alpha$, so $s_n=\epsilon_n$.
		\begin{definition}
		A sequence $s= (s_0, s_1, s_2,\ldots)\in\{0,1,2\}^{\mathbb{N}}$ is \emph{degenerate} if there exists a natural number $N$ such that either $s_n=0$ for all $n\geq N$ or $s_n=2$ for all $n\geq N$. That is, the sequence $s$ is eventually constant $0$s or constant $2$s. We say a $3$-adic integer $\alpha$ is \emph{degenerate} if its associated sequence of digits $s_{\alpha}$ is degenerate.
		\end{definition}
		\begin{rem}
		Non-degeneracy is only a technical condition which forces weak primitivity of the relevant mixed substitution sequences, allowing us to use the previous results on primitive sequences of substitutions. A similar condition can also be defined to enforce strong primitivity, but in this case it is not necessary. Removing these spurious cases helps to simplify the proof\footnote{In fact this only removes, up to homeomorphism, a single tiling space from the family of spaces we are studying -- the one associated to a constant sequence of substitutions $s=0,0,0,\ldots$, which is the usual Chacon tiling space and is well studied.}.
		\end{rem}
		\begin{prop}\label{degenerate}
		Let $F=\{\psi_0,\psi_1,\psi_2\}$. For a $3$-adic integer $\alpha \in \mathbb{Z}_3$, the associated system of mixed substitutions $(F, s_{\alpha})$ is weakly primitive if and only if $\alpha$ is non-degenerate.
		\end{prop}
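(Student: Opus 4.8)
The plan is to prove both implications by exploiting the two fixed letters of the system, namely $\psi_0(b)=b$ and $\psi_2(a)=a$. First I would record the elementary observation that each of $M_0,M_1,M_2$ is a nonnegative integer matrix with no zero row and no zero column: every letter maps to a nonempty word (so every column is nonzero), and every letter occurs in the image of some letter (so every row is nonzero). The class of nonnegative matrices with no zero row and no zero column is closed under multiplication, and it enjoys an \emph{absorbing} property: if $P$ is strictly positive and $Q$ has no zero row, then $QP$ is strictly positive, since $(QP)_{ij}=\sum_k Q_{ik}P_{kj}>0$ exactly when row $i$ of $Q$ is nonzero. Consequently, to verify weak primitivity from a given index $n$ it suffices to locate somewhere in the tail $s_n,s_{n+1},\ldots$ a contiguous block of indices whose product of substitution matrices is strictly positive, and then to take the product $M_{s_\alpha[n,\,\cdot\,]}$ ending exactly at that block; the preceding factor $M_{s_\alpha[n,\,\cdot\,]}$ has no zero row, so absorption makes the whole product strictly positive.

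For the necessity direction I would argue the contrapositive. If $\alpha$ is degenerate, then $s_\alpha$ is eventually constant, say $s_n=0$ for all $n\geq N$. Since $\psi_0(b)=b$, the letter $a$ never appears in $\psi_0^k(b)$, so $M_0^k$ (which is lower triangular) has a permanent zero in its $(1,2)$ entry. Hence for $n\geq N$ every matrix $M_{s_\alpha[n,n+k]}=M_0^{k+1}$ fails to be strictly positive, so no $k$ works for this particular $n$ and $(F,s_\alpha)$ is not weakly primitive. The case $s_n=2$ for all $n\geq N$ is identical, using $\psi_2(a)=a$ and the permanent zero in the $(2,1)$ entry of the upper triangular matrix $M_2^k$.

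For the sufficiency direction, assume $\alpha$ is non-degenerate and fix $n$. A short computation shows that $M_1=\begin{pmatrix}2&1\\1&2\end{pmatrix}$ is strictly positive and that $M_0M_2=\begin{pmatrix}3&6\\2&7\end{pmatrix}$ and $M_2M_0=\begin{pmatrix}7&2\\6&3\end{pmatrix}$ are both strictly positive, whereas pure powers of $M_0$ or of $M_2$ never are. If $s_m=1$ for some $m\geq n$, the single-factor block $M_1$ at position $m$ is strictly positive and absorption finishes the case. Otherwise the tail from $n$ consists only of $0$s and $2$s; non-degeneracy forbids it from being all $0$s or all $2$s, so both symbols occur and there are consecutive indices $m,m+1\geq n$ with $\{s_m,s_{m+1}\}=\{0,2\}$. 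The block $M_{s_m}M_{s_{m+1}}\in\{M_0M_2,\,M_2M_0\}$ is strictly positive, and absorption again yields a strictly positive $M_{s_\alpha[n,m+1]}$. Either way the required $k$ exists, proving weak primitivity. The genuinely load-bearing fact — and the only place the specific substitutions enter — is that mixing a $0$ with a $2$ in either order already produces strict positivity while never mixing them leaves a permanent zero; the absorbing lemma then reduces the whole statement to this finite check. I expect the only point needing care to be the bookkeeping in the absorbing step, namely confirming that the flanking product $M_{s_\alpha[n,m-1]}$ never acquires a zero row, which follows at once from closure of the no-zero-row class under products.
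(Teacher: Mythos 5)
Your proof is correct and follows essentially the same route as the paper's: both directions rest on the permanent zero entry in powers of the triangular matrices $M_0$ and $M_2$ for necessity, and for sufficiency on locating a strictly positive one- or two-letter block ($M_1$, $M_0M_2$ or $M_2M_0$) in the tail and absorbing the preceding factors into it. The only cosmetic difference is that you absorb the prefix via the general \emph{no zero row} property of products of the $M_i$, whereas the paper uses the strictly positive diagonal of the specific prefix $M_0^{k-1}$ (resp.\ $M_2^{k-1}$); both observations do the same job.
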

		\begin{proof}
		The necessity of non-degeneracy of $\alpha$ for primitivity of $(F,s_{\alpha})$ is clear, as all positive powers of $M_0$ and $M_2$ contain a zero-entry.
		
		For sufficiency, let $n\geq 0$ be given. It is easy to verify that for $i,j\in\{0,1,2\}$, all products $M_i M_j$ have strictly positive entries except for $M_0 M_0$ and $M_2 M_2$. If $M_{s_n}=M_1$ then we are done. If $M_{s_n}=M_0$ then by the non-degeneracy of $\alpha$, there exists a least $k\geq 1$ such that $M_{s_{n+k}}=M_1$ or $M_2$. In both cases the matrix $M_{s[n+k-1,n+k]}$ has strictly positive entries and so, since the matrix $M_{s[n,n+k-2]}$ has strictly positive diagonal, we conclude that $M_{s[n,n+k]}$ has strictly positive entries. This similarly holds if $M_{s_n}=M_2$. So $(F,s_{\alpha})$ is weakly primitive.
		\end{proof}
		\begin{prop}\label{uncountable}
		There are an uncountable number of non-degenerate $3$-adic integers.
		\end{prop}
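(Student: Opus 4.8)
The plan is to reduce this to an elementary cardinality count, using the identification of $\mathbb{Z}_3$ with the full sequence space $\{0,1,2\}^{\mathbb{N}}$ of digit sequences and showing that the degenerate integers form only a countable subset, so that their complement must be uncountable.

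First I would recall that every $\alpha\in\mathbb{Z}_3$ is determined by its sequence of digits $s_\alpha=(\epsilon_0,\epsilon_1,\epsilon_2,\ldots)\in\{0,1,2\}^{\mathbb{N}}$, and conversely every such sequence arises from a unique $3$-adic integer via the standard expansion $\alpha=\sum_{k\geq 0}\epsilon_k 3^k$. Thus $\mathbb{Z}_3$ is in bijection with $\{0,1,2\}^{\mathbb{N}}$, which has cardinality $2^{\aleph_0}$ and is in particular uncountable.

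Next I would count the degenerate integers. By definition, $\alpha$ is degenerate precisely when $s_\alpha$ is eventually constant equal to $0$ or eventually constant equal to $2$. An eventually-$0$ sequence is completely determined by its finite initial word $\epsilon_0\epsilon_1\cdots\epsilon_{N-1}$, for the least $N$ past which all digits vanish; the assignment of this prefix is injective on the set of eventually-$0$ sequences, and since the set of finite words $\{0,1,2\}^*=\bigcup_{n\geq 0}\{0,1,2\}^n$ over a finite alphabet is a countable union of finite sets and hence countable, there are only countably many eventually-$0$ sequences. The identical argument yields countably many eventually-$2$ sequences, and a union of two countable sets is countable, so the set of degenerate $3$-adic integers is countable.

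Finally I would conclude that the non-degenerate integers, being the complement in $\mathbb{Z}_3$ of a countable set within an uncountable set, form an uncountable set. There is no genuine obstacle here; the only point requiring any care is the observation that degeneracy is always witnessed by a finite amount of data, namely a finite prefix together with a binary choice of eventual tail, which is exactly what forces the degenerate set to be countable.
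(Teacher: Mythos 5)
Your argument is correct and is essentially identical to the paper's own proof: both count the degenerate integers as a countable set (a finite prefix plus a binary choice of eventual tail, hence a countable union of finite sets) and conclude that their complement in the uncountable set $\mathbb{Z}_3$ is uncountable. Your version merely spells out the bijection $\mathbb{Z}_3\cong\{0,1,2\}^{\mathbb{N}}$ more explicitly.
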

		\begin{proof}There are only a countable number of degenerate $3$-adic integers given by those sequences with some finite initial string followed by a constant tail of $0$s or $2$s, since the set of finite initial strings is countable and the union of two countable sets is countable. The complement of a countable subset of an uncountable set is uncountable and so the non-degenerate $3$-adic integers form an uncountable set.
		\end{proof}		
		

		\subsection{Calculating Cohomology for the Mixed Chacon Tilings}
		\begin{lem}\label{recognisable}
		If $\alpha$ is a non-degenerate $3$-adic integer then $(F,s_{\alpha})$ is recognisable.
		\end{lem}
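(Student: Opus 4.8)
The plan is to use the equivalence, noted just above, between recognisability and the unique composition property, and then to establish unique composition for each substitution by exhibiting a \emph{synchronising word}: a short factor whose every occurrence in an image is forced to sit at a block boundary. Fix $i\geq 0$; since $\phi_{s_i}$ is surjective it suffices to show that every $\mathcal{T}\in\Sigma_{F,\sigma^i(s_{\alpha})}$ admits a unique partition into words of the form $\psi_{s_i}(c)$, $c\in\mathcal{A}$. I would first record the letter-swap symmetry $\tau\colon a\leftrightarrow b$, under which $\psi_2=\tau\psi_0\tau$ and $\psi_1=\tau\psi_1\tau$; since $\tau$ carries admitted sequences of $(F,s_{\alpha})$ to admitted sequences of the system with the roles of $0$ and $2$ interchanged (which is again non-degenerate), the case $s_i=2$ reduces to $s_i=0$. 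Thus only $\psi_0$ and $\psi_1$ need to be treated directly.

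For $\psi_1$ (where $a\mapsto aab$ and $b\mapsto bba$, both of length $3$) I would carry out the finite check of all length-$3$ factors of a concatenation of the blocks $aab$ and $bba$, read at each of the three offsets relative to a block boundary. This shows that $aab$ occurs only as a whole $a$-block and $bba$ only as a whole $b$-block, while the other two offsets produce only the words $aba,abb,baa,bab,aaa,bbb$. Consequently the set of block boundaries of $\mathcal{T}$ equals the set of starting positions of the factors $aab$ and $bba$, a datum intrinsic to $\mathcal{T}$, so the partition is unique.

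For $\psi_0$ (where $a\mapsto aabba$ and $b\mapsto b$) the block lengths differ, so a phase argument is unavailable and a synchronising word must do the work. I would show that $aab$ occurs in any image $\psi_0(\mathcal{T}')$ only at the start of an $aabba$-block. The key sub-step is to locate every occurrence of the factor $aa$: the letter $a$ sits only in positions $1,2,5$ of an $aabba$-block, so $aa$ arises either as positions $1,2$ of such a block or across an $aabba\,|\,aabba$ junction as (last $a$)(first $a$); in the latter case the following letter is again $a$, giving $aaa$ rather than $aab$. Hence every occurrence of $aab$ marks a block start, and since each $aabba$-block begins with $aab$, the $a$-blocks are in bijection with the occurrences of $aab$, the remaining letters being the singleton $b$-blocks. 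The partition is therefore read off from $\mathcal{T}$ and is unique, and $\psi_2$ follows by the symmetry above.

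I expect the main obstacle to be the $\psi_0$ (equivalently $\psi_2$) case: because one letter has a length-one image, the boundaries between the long block $aabba$ and runs of singleton $b$-blocks are not detected by any periodicity, and one must rule out spurious occurrences of the marker by the junction analysis above. By contrast, non-degeneracy of $\alpha$ enters only to guarantee weak primitivity through Proposition \ref{degenerate}, hence that the systems $(F,\sigma^j(s_{\alpha}))$ are the primitive, minimal systems to which the preceding theory applies and that both letters actually occur; the synchronisation itself is purely combinatorial and uniform in the tail of $s_{\alpha}$.
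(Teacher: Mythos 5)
Your proof is correct, and while it shares the paper's overall skeleton (reduce to the unique composition property, handle $\psi_1$ and $\psi_0$ directly, and dispose of $\psi_2$ via the letter-swap $\tau$ with $\psi_2=\tau\psi_0\tau$), the combinatorial core is genuinely different. The paper desubstitutes by classifying the maximal runs of a distinguished letter and showing each run type extends to a forced patch that pins down its decomposition: for $s_i=1$ it sorts occurrences of $a$ by whether they sit in $bab$, $baab$ or $baaab$, and for $s_i=0$ it sorts runs of $b$ of length $1$, $2$ or $3$ --- the latter requiring the footnoted induction that a run of four $b$'s never occurs. You instead exhibit synchronising data: for $\psi_1$ you exploit the constant block length $3$ and verify that $aab$ and $bba$ never occur at a misaligned offset (your list $aba,abb,baa,bab,aaa,bbb$ of offset-$1$ and offset-$2$ factors is correct and disjoint from $\{aab,bba\}$), and for $\psi_0$ you show every occurrence of $aab$ marks the start of an $aabba$-block, the junction analysis of $aa$ across two adjacent $a$-blocks being the one point that needs care and which you handle correctly. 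A concrete advantage of your route is that the $\psi_0$ case needs no bound on the length of runs of $b$'s: once the $a$-blocks are located by the marker $aab$, every remaining letter is forced to be a singleton $b$-block, so the inductive footnote of the paper is avoided entirely. The paper's run-classification, on the other hand, yields slightly more explicit local desubstitution rules. Your closing observation that non-degeneracy plays no role in the synchronisation itself is accurate; it is likewise not used in the paper's combinatorics.
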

		\begin{proof}
		We use the equivalence between recognisability and the unique composition property. Suppose $s_i=1$, then for any $\mathcal{T}\in\Sigma_{F,\sigma^i(s_{\alpha})}$ we note that the symbol $a$ can only appear in a string of length $1,2$ or $3$. We can partition the symbols appearing in the sequence $\mathcal{T}$ according to the rule that: 
		\begin{itemize}
		\item If $a$ appears in the string $bab$ then we know the patch extends to $b(bab)ba$ and $a$ belongs to a substituted word $\psi_{s_i}(b)\psi_{s_i}(b)$.
		\item If $a$ appears in the string $baab$ then we know the patch extends to $aa(baab)$ and $a$ belongs to the substituted word $\psi_{s_i}(a)\psi_{s_i}(a)$.
		\item If $a$ appears in the string $baaab$ then we know the patch extends to $b(baaab)$ and $a$ belongs to a substituted word $\psi_{s_i}(b)\psi_{s_i}(a)$.
		\end{itemize}
		As the symbol $a$ appears in any substituted word, this is enough to see that unique composition holds in this case.
		
		If $s_i=0$ then we note that the symbol $b$ either appears\footnote{It is not obvious that length $4$ cannot occur. This is true because, of the three substitutions $\psi_i$, only $\psi_0$ can produce a string of $4$ $b$s, but only when a similar string has been produced in an earlier substituted word. By induction this never happens.} in a string of length $1,2$ or $3$. We can partition the symbols appearing in the sequence $\mathcal{T}$ according to the rule that: 
		\begin{itemize}
		\item If $b$ appears in a string $aba$ then we know the patch extends to $aabb(aba)abba$ and $b$ belongs to the substituted word $\psi_{s_i}(a)\psi_{s_i}(b)\psi_{s_i}(a)$.
		\item If $b$ appears in a string $aabba$ then we know the patch extends to $aabba$ and $b$ belongs to the substituted word $\psi_{s_i}(a)$.
		\item If $b$ appears in a string $babba$ then we know the patch extends to $aab(babba)abba$ and $b$ belongs to the substituted word $\psi_{s_i}(a)\psi_{s_i}(b)\psi_{s_i}(b)\psi_{s_i}(a)$.
		\item If $b$ appears in a string $abbba$ then we know the patch extends to $aabb(abbba)abba$ and $b$ belongs to the substituted word $\psi_{s_i}(a)\psi_{s_i}(b)\psi_{s_i}(b)\psi_{s_i}(b)\psi_{s_i}(b)\psi_{s_i}(a)$.		
		\end{itemize}
		As the symbol $b$ appears in any substituted word, this is enough to see that unique composition holds in this case. Without loss of generality, if $s_i=2$ unique composition also holds, based on the above case $s_i=0$ but with the roles of $a$ and $b$ reversed.
		\end{proof}
		We are now in a position to prove our main result. Recall from Section \ref{goodearl}, and the proof of the Goodearl-Rushing result, that for $\alpha = \ldots \epsilon_2 \epsilon_1 \epsilon_0$ a $3$-adic integer, $G_{\alpha}:=\displaystyle{\lim_{\longrightarrow}(B_{\epsilon_n})}$.
		\begin{thm}
		If $\alpha$ is a non-degenerate $3$-adic integer then $$\check{H}^1(\Omega_{F,s_{\alpha}}) \cong G_{\alpha} \oplus \mathbb{Z}.$$
		\end{thm}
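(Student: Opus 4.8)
The plan is to identify the two pieces furnished by Corollary \ref{cor1} and compute each separately. First I would record that, since $\alpha$ is non-degenerate, Proposition \ref{degenerate} gives weak primitivity of $(F,s_\alpha)$ and Lemma \ref{recognisable} gives recognisability, so the hypotheses of Theorem \ref{LES} and Corollary \ref{cor1} hold. It therefore suffices to show that the direct limit $\lim_{\longrightarrow}(\mathbb{Z}^2, M_{s_i}^T)$ is isomorphic to $G_\alpha$, and that the inverse limit $\Xi = \lim_{\longleftarrow}(S_i, g_i)$ is connected with $\check H^1(\Xi)\cong\mathbb{Z}$ (i.e. that the invariant $m$ of Corollary \ref{cor1} equals $1$).

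For the direct limit I would invoke the conjugation identities (\ref{matrices1})--(\ref{matrices3}). Since $s_i=\epsilon_i$ and $\det L=-1$, the matrix $L$ lies in $GL_2(\mathbb{Z})$, and the identity $L M_{s_i}^T L^{-1}=B_{\epsilon_i}$ rearranges to $B_{\epsilon_i}L = L M_{s_i}^T$. This exactly says that the maps $L$ form a commuting ladder of isomorphisms between the direct system $(\mathbb{Z}^2, M_{s_i}^T)$ and the direct system $(\mathbb{Z}^2, B_{\epsilon_i})$. Passing to direct limits then gives $\lim_{\longrightarrow}(\mathbb{Z}^2, M_{s_i}^T)\cong\lim_{\longrightarrow}(B_{\epsilon_i})=G_\alpha$, which is the key structural input from the Goodearl--Rushing setup.

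For the inverse limit $\Xi$ I would first observe that, because every two-letter word is admitted ($\mathcal{L}^2_{F,\sigma^i(s)}=\mathcal{A}^2$ for all $i$), each $S_i$ is the full vertex-edge complex on the four edges $e_{aa},e_{ab},e_{ba},e_{bb}$. Tracing the endpoint identifications shows every vertex has degree two and that the complex is connected, so it is a single cycle, homeomorphic to a circle, with first cohomology $\mathbb{Z}$. The main computation is then to check, from the first and last letters of the substituted words, that each restriction $g_i|_{S_{i+1}}$ is a simplicial \emph{isomorphism}: because $\psi_0$ and $\psi_2$ fix the first and last letter of every image word, $g_i$ is the identity on the vertex-edge complex whenever $s_i\in\{0,2\}$; and because $\psi_1$ sends $a\mapsto aab$ and $b\mapsto bba$, the map $g_i$ swaps $e_{aa}\leftrightarrow e_{ba}$ and $e_{ab}\leftrightarrow e_{bb}$ when $s_i=1$, which is again a homeomorphism of the cycle. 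An inverse limit of homeomorphisms of a fixed compact space is homeomorphic to that space via the first-coordinate projection, so $\Xi$ is a circle; it is connected and $\check H^1(\Xi)\cong\mathbb{Z}$, giving $m=1$.

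Combining these two computations with Corollary \ref{cor1} yields $\check H^1(\Omega_{F,s_\alpha})\cong\lim_{\longrightarrow}(\mathbb{Z}^2, M_{s_i}^T)\oplus\mathbb{Z}\cong G_\alpha\oplus\mathbb{Z}$, as required. The hard part will be the verification that every $g_i$ restricts to a homeomorphism of the vertex-edge complex; this is what forces $\Xi$ to be a single circle rather than a more complicated inverse limit, and it is precisely the reason the $\Xi$-contribution collapses to a single $\mathbb{Z}$ summand independent of $\alpha$, leaving all the $\alpha$-dependence in the factor $G_\alpha$.
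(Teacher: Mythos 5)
Your proposal is correct and follows essentially the same route as the paper: establish primitivity and recognisability via Proposition \ref{degenerate} and Lemma \ref{recognisable}, observe that all four vertex edges are present so each $S_i$ is a circle on which $g_i$ acts as the identity (for $\psi_0,\psi_2$) or a reflection (for $\psi_1$), conclude $\Xi\cong S^1$ so the $\Xi$-contribution is a single $\mathbb{Z}$ summand, and conjugate the direct system $(\mathbb{Z}^2,M_{s_i}^T)$ by $L$ to identify it with $G_\alpha$. The only cosmetic difference is that you package the splitting via Corollary \ref{cor1} while the paper applies the exact sequence of Theorem \ref{LES} directly; the content is identical.
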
	
		\begin{proof}
		Let $(F,s_{\alpha})$ be a mixed substitution system for non-degenerate $\alpha\in\mathbb{Z}_3$. By Proposition \ref{degenerate} and Lemma \ref{recognisable}, $(F,s_{\alpha})$ is a primitive, recognisable mixed substitution system and so Theorem \ref{homeomorphism} applies. This means we can calculate the first \Cech cohomology of the space $\Omega_{F,s_{\alpha}}$ using Theorem \ref{LES}. It is easy to show that for every $i$, $$\mathcal{L}^2_{F,\sigma^i(s_{\alpha})}=\{aa,ab,ba,bb\}$$ and so the BD complex for $(F,\sigma^i(s_{\alpha}))$ is given by the two tile edges for the tiles $a$ and $b$ and all four possible vertex edges as shown in Figure \ref{fig:M2}.		
		\begin{figure}[h]
		$$
		\begin{tikzpicture}[node distance=2cm, auto]
		\clip (-5,-2.5) rectangle (5,2.5) ;
		\node [draw, circle, minimum size=3.7cm, thick, dashed] (xx) [label=$S_{F,\sigma^i(s)}$]{};
  		\node [draw, fill, circle, minimum size=.15cm] (00) [below left of=xx, node distance=1.4142cm] {};
  		\node [draw, fill, circle, minimum size=.15cm] (10) [above of=00] {};
  		\node [draw, fill, circle, minimum size=.15cm] (01) [right of=00] {};
  		\node [draw, fill, circle, minimum size=.15cm, use as bounding box] (11) [right of=10] {};
  		
  		
  		\draw [->,ultra thick] (00) to node {$aa$} (10);
  		\draw [->,ultra thick] (11) to node {$bb$} (01);
  		\draw [->,ultra thick] (00) to node [swap] {$ab$} (01);
  		\draw [->,ultra thick] (11) to node [swap] {$ba$} (10);

  		\path (10) edge[ out=140, in=220
                , looseness=0.1, loop
                , distance=5cm, ->
                , ultra thick]
            node {$a$} (00);
            
        \path (01) edge[ out=320, in=40
                , looseness=0.1, loop
                , distance=5cm, ->
                , ultra thick]
            node {$b$} (11);
		\end{tikzpicture}
		$$
		\caption{The Barge-Diamond complex for $(F,\sigma^i(s_{\alpha}))$.}\label{fig:M2}
		\end{figure}
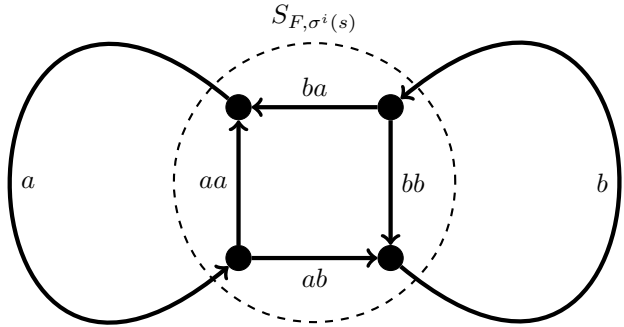
		
We see that the subcomplex $S_{F,\sigma^i(s_{\alpha})}$ is homeomorphic to a circle for all $i$ and the induced maps $g_i$ either fix $S_{F,\sigma^i(s_{\alpha})}$ (if $s_i=0,2$) or reflect $S_{F,\sigma^i(s_{\alpha})}$ (if $s_i=1$). So $\Xi$ is topologically a circle and then $\tilde{H}^0(\Xi)=0$ and $\check{H}^1(\Xi)=\mathbb{Z}$. The exact sequence from \ref{LES} then becomes a split short exact sequence $$0\to \lim_{\longrightarrow}(\mathbb{Z}^2,M_{\epsilon_n}^T) \to \check{H}^1(\Omega_{F,s_{\alpha}}) \to \mathbb{Z} \to 0.$$
		So the first \Cech cohomology $\check{H}^1(\Omega_{F,s_{\alpha}})$ of the tiling space is given by $$\check{H}^1(\Omega_{F,s_{\alpha}})\cong \lim_{\longrightarrow}(M_{\epsilon_n}^T) \oplus \mathbb{Z}.$$
Using equations $(\ref{matrices1})$ -- $(\ref{matrices3})$ we see that the diagram
		$$\begin{tikzpicture}[node distance=2cm, auto]
  		\node (00) {$\mathbb{Z}^2$};
  		\node (10) [above of=00] {$\mathbb{Z}^2$};
  		\node (01) [right of=00] {$\mathbb{Z}^2$};
  		\node (11) [right of=10] {$\mathbb{Z}^2$};
  		\node (02) [right of=01] {$\mathbb{Z}^2$};
  		\node (12) [right of=11] {$\mathbb{Z}^2$};
  		\node (03) [right of=02] {$\cdots$};
  		\node (13) [right of=12] {$\cdots$};
  		\draw[->] (10) to node [swap] {$\stackrel{L}{\cong}$} (00);
  		\draw[->] (11) to node [swap] {$\stackrel{L}{\cong}$} (01);
  		\draw[->] (12) to node [swap] {$\stackrel{L}{\cong}$} (02);
  		\draw[->] (00) to node [swap] {$B_{\epsilon_0}$} (01);
  		\draw[->] (01) to node [swap] {$B_{\epsilon_1}$} (02);
  		\draw[->] (02) to node [swap] {$B_{\epsilon_2}$} (03);
  		\draw[->] (10) to node {$M^T_{\epsilon_0}$} (11);
  		\draw[->] (11) to node {$M^T_{\epsilon_1}$} (12);
  		\draw[->] (12) to node {$M^T_{\epsilon_2}$} (13);  
		\end{tikzpicture}$$	
commutes, giving $\displaystyle{\lim_{\longrightarrow}(M_{\epsilon_n}^T) \cong \lim_{\longrightarrow} (B_{\epsilon_n})}$, which by definition is $G_{\alpha}$. We conclude that $$\check{H}^1(\Omega_{F,s_{\alpha}}) \cong \lim_{\longrightarrow} (M_{\epsilon_n}^T) \oplus \mathbb{Z} \cong G_{\alpha} \oplus \mathbb{Z}.$$
		\end{proof}
		\begin{cor}[Theorem \ref{MainTheorem}]
		There exists a family of minimal mixed substitution tiling spaces exhibiting an uncountable collection of distinct isomorphism classes of first \Cech cohomology groups.
		\end{cor}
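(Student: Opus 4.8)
The family I would exhibit is $\{\Omega_{F,s_\alpha}\}$ indexed by the non-degenerate $3$-adic integers $\alpha\in\mathbb{Z}_3$, of which there are uncountably many by Proposition \ref{uncountable}. Each such space is minimal: by Proposition \ref{degenerate} non-degeneracy of $\alpha$ is equivalent to weak primitivity of $(F,s_\alpha)$, and weakly primitive systems were shown earlier to carry a minimal translation action. By the preceding theorem, $\check{H}^1(\Omega_{F,s_\alpha})\cong G_\alpha\oplus\mathbb{Z}$, so the entire problem reduces to proving that the collection of isomorphism classes $\{[G_\alpha\oplus\mathbb{Z}]\colon\alpha\text{ non-degenerate}\}$ is uncountable.

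The strategy I would adopt is a fibre-counting argument. Consider the map $\Phi\colon\alpha\mapsto[G_\alpha\oplus\mathbb{Z}]$ defined on the (uncountable) set of non-degenerate $3$-adic integers. If I can show that each fibre of $\Phi$ is countable — equivalently, that for each fixed $\alpha$ there are only countably many $\alpha'$ with $G_{\alpha'}\oplus\mathbb{Z}\cong G_\alpha\oplus\mathbb{Z}$ — then the image of $\Phi$ cannot be countable, since an uncountable set is not a countable union of countable fibres. This yields uncountably many distinct isomorphism classes of first \Cech cohomology groups and hence Theorem \ref{MainTheorem}. It is worth stressing that Theorem \ref{countable} does not suffice on its own here: the relation $G_{\alpha'}\oplus\mathbb{Z}\cong G_\alpha\oplus\mathbb{Z}$ is a priori coarser than $G_{\alpha'}\cong G_\alpha$, because cancellation of a free summand can fail for torsion-free abelian groups of finite rank, so the countability of the fibres genuinely needs to be established.

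To establish that countability I would re-run the Goodearl--Rushing computation of Theorem \ref{countable} one rank higher. Realise $G_\alpha\oplus\mathbb{Z}$ inside the rank-$3$ space $W=\langle w_0,z_0,u_0\rangle_{\mathbb{Q}}$ as $\bigcup_n\langle w_0,z_n,u_0\rangle$, where $u_0$ spans the free summand and $z_n=3^{-n}(z_0-\alpha_{n-1}w_0)$ as in (\ref{generators1}), and realise $G_{\alpha'}\oplus\mathbb{Z}$ in the same $W$ with the same $u_0$ and with $z'_n$ built from $\alpha'$. An isomorphism $\varphi\colon G_\alpha\oplus\mathbb{Z}\to G_{\alpha'}\oplus\mathbb{Z}$ extends to a $\mathbb{Q}$-automorphism $\tilde\varphi\in GL_3(\mathbb{Q})$, and writing $\tilde\varphi(z_n)=a_nw_0+b_nz'_{k(n)}+c_nu_0$ for the minimal admissible level $k(n)$, the crucial observation is that both $w_0$ and $z'_{k(n)}$ have zero $u_0$-component, so the $u_0$-coordinate decouples completely. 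Consequently the minimality argument forcing $3\nmid b_n$, the cross-multiplication of (\ref{algebra}), and the $3$-adic limit of (\ref{convseq}) all go through using only the $w_0$- and $z_0$-coordinates, pinning down $\alpha'=(r_z-\alpha r_w)/(\alpha s_w-s_z)$ in terms of finitely many integer parameters read off from $\tilde\varphi$; since there are only countably many such parameter choices, each fibre of $\Phi$ is countable. The main obstacle is precisely this rank-$3$ adaptation: one must verify carefully that the extra free coordinate $u_0$ truly decouples and spoils neither the minimality of $k(n)$ (which supplies $3\nmid b_n$) nor the convergence of the limiting sequence. Once that bookkeeping is confirmed the conclusion follows formally.
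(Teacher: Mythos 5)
Your overall architecture (index the family by non-degenerate $\alpha$, get minimality from Proposition \ref{degenerate} plus the earlier minimality result for weakly primitive systems, reduce to counting fibres of $\alpha\mapsto[G_\alpha\oplus\mathbb{Z}]$ over an uncountable index set) matches the paper, and your explicit treatment of minimality is actually slightly more careful than the paper's. Where you diverge is the key cancellation step. The paper disposes of it in one line by citing the general fact that for \emph{arbitrary} abelian groups $A\oplus\mathbb{Z}\cong B\oplus\mathbb{Z}$ implies $A\cong B$, and then applies Theorem \ref{countable} directly; you instead doubt that fact and propose to re-run the entire Goodearl--Rushing computation in rank $3$. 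Your stated reason for the detour is mistaken: cancellation of a finitely generated summand, and in particular of $\mathbb{Z}$, holds in the whole category of abelian groups (this is the Walker--Cohn theorem of 1956). The genuine failures of cancellation for finite-rank torsion-free groups concern non-free rank-one summands (J\'onsson's examples) and power cancellation $A^n\cong B^n$, not a free $\mathbb{Z}$ summand, so the relation $G_\alpha\oplus\mathbb{Z}\cong G_{\alpha'}\oplus\mathbb{Z}$ is in fact \emph{not} coarser than $G_\alpha\cong G_{\alpha'}$. That said, your rank-$3$ adaptation is a legitimate alternative: the extra coordinate $u_0$ does decouple because $w_0$ and $z'_{k(n)}$ have zero $u_0$-component, the unboundedness of $k(n)$ and the non-divisibility $3\nmid b_n$ survive verbatim, and the limiting identity $\alpha'=(r_z-\alpha r_w)/(\alpha s_w-s_z)$ again leaves only countably many possible $\alpha'$ for each $\alpha$. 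What you lose is economy -- you are reproving a special case of a classical cancellation theorem by hand -- and you leave the bookkeeping of that adaptation explicitly unverified; what you gain is a self-contained argument that does not rely on the Walker--Cohn result. Either route closes the proof, but you should correct the claim that $\mathbb{Z}$-cancellation can fail.
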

		\begin{proof}
		It is a general result that two abelian groups $A$ and $B$ are isomorphic if and only if $A\oplus\mathbb{Z}$ and $B\oplus\mathbb{Z}$ are isomorphic. So if $G_{\alpha}\oplus\mathbb{Z}\cong G_{\alpha'}\oplus\mathbb{Z}$ then $G_{\alpha}\cong G_{\alpha'}$. By the definition of $\sim$-equivalent $3$-adic integers then, we see that for $\alpha,\alpha'\in\mathbb{Z}_3$, the groups $\check{H}^1(\Omega_{F,s_{\alpha}})$ and $\check{H}^1(\Omega_{F,s_{\alpha'}})$ are isomorphic if and only if $G_{\alpha}\oplus\mathbb{Z}$ and $G_{\alpha'}\oplus\mathbb{Z}$ are isomorphic, which is if and only if $\alpha \sim \alpha'$.
		
		By Theorem \ref{countable} these equivalence classes are all countable. Recall that (assuming the axiom of choice) a countable disjoint union of countable sets is countable. As there are an uncountable number of non-degenerate $3$-adic integers by Proposition \ref{uncountable}, and the $\sim$-equivalence classes partition this set into countable subsets, it follows that there are an uncountable number of distinct isomorphism classes of first \Cech cohomology groups $\check{H}^1(\Omega_{F,s_{\alpha}})$ for non-degenerate $3$-adic integers $\alpha$.
		\end{proof}
		We remark that as a consequence, for \emph{most} $3$-adic integers $\alpha$, the cohomology groups $\check{H}^1(\Omega_{F,s_{\alpha}})$ cannot be written in the form
		\begin{equation}\label{pathalogical}
		A \oplus \left(\mathbb{Z}\left[\frac{1}{n_1}\right] \oplus \cdots \oplus \mathbb{Z}\left[\frac{1}{n_k}\right]\right)
		\end{equation}
		for finitely generated abelian group $A$ and natural numbers $n_i$, $1\leq i \leq k$. Of interest is that these cohomology groups all appear as embedded subgroups of $\mathbb{Q}^2\oplus\mathbb{Z}$, and project onto a proper and full rank subgroup of the first summand. In particular they all have rank $3$ as abelian groups, so the number of different isomorphism classes of cohomology is being governed by the complexity of the subgroup structure of the group $\mathbb{Q}^2$.

	If we call groups which cannot be written in the above form (\ref{pathalogical}) \emph{pathological}, it is interesting to ask how typical it is that such pathological cohomology groups appear for general tilings. Does there exist a single substitution with associated cohomology of pathological type? The author was originally motivated by this problem and it appears to still be open.





\end{document}